\newcommand{\class}[1]{{\sc #1}}
\newcommand{\complete}[1]{{\sc #1}-{complete}}
\newcommand{\for}[1]{\ensuremath E^+(#1)}
\newcommand{\bac}[1]{\ensuremath E^-(#1)}
\newcommand{\wt}{\operatorname{wt}}
\tikzset{
  bigblue/.style={circle, draw=blue!80,fill=blue!40,thick, inner sep=1.5pt, minimum size=5mm},
  bigred/.style={circle, draw=red!80,fill=red!40,thick, inner sep=1.5pt, minimum size=5mm},
  bigblack/.style={circle, draw=black!100,fill=black!40,thick, inner sep=1.5pt, minimum size=5mm},
  bluevertex/.style={circle, draw=blue!100,fill=blue!100,thick, inner sep=0pt, minimum size=2mm},
  redvertex/.style={circle, draw=red!100,fill=red!100,thick, inner sep=0pt, minimum size=2mm},
  blackvertex/.style={circle, draw=black!100,fill=black!100,thick, inner sep=0pt, minimum size=2mm},  
  whitevertex/.style={circle, draw=black!100,fill=white!100,thick, inner sep=0pt, minimum size=2mm},  
  smallblack/.style={circle, draw=black!100,fill=black!100,thick, inner sep=0pt, minimum size=1mm},
  smallwhite/.style={circle, draw=black!100,fill=white!100,thick, inner sep=0pt, minimum size=1mm},    
}
\title{A Dichotomy Theorem for Circular Colouring Reconfiguration}
\renewcommand*\@fnsymbol[1]{\the#1}  %% change footnotes to numeric superscripts
\author{Richard C. Brewster\thanks{Department of Mathematics and Statistics, Thompson Rivers University, Kamloops, Canada. E-mail: \texttt{rbrewster@tru.ca}, \texttt{smcguinness@tru.ca}.}
\and Sean McGuinness\footnotemark[1] 
\and Benjamin Moore\thanks{Department of Mathematics, Simon Fraser University, Burnaby, Canada. E-mail: \texttt{brmoore@sfu.ca}.}
\and Jonathan A. Noel\thanks{Mathematical Institute, University of Oxford, Oxford, United Kingdom. E-mail: \texttt{noel@maths.ox.ac.uk}.} }
\newtheoremstyle{case}{}{}{\normalfont}{}{\itshape}{:}{ }{}
\newtheorem{thm}{Theorem}
\newtheorem{lem}[thm]{Lemma}
\newtheorem{prop}[thm]{Proposition}
\newtheorem{conj}[thm]{Conjecture}
\newtheorem{cor}[thm]{Corollary}
\newtheorem{lemma}[thm]{Lemma}
\theoremstyle{definition}
\newtheorem*{ack}{Acknowledgements}
\newtheorem{rem}[thm]{Remark}
\newtheoremstyle{case}{}{}{\normalfont}{}{\itshape}{\normalfont:}{ }{}
\theoremstyle{case}
\tikzset{
  bigblue/.style={circle, draw=blue!80,fill=blue!40,thick, inner sep=1.5pt, minimum size=5mm},
  bigred/.style={circle, draw=red!80,fill=red!40,thick, inner sep=1.5pt, minimum size=5mm},
  bigblack/.style={circle, draw=black!100,fill=black!40,thick, inner sep=1.5pt, minimum size=5mm},
  bluevertex/.style={circle, draw=blue!100,fill=blue!100,thick, inner sep=0pt, minimum size=2mm},
  redvertex/.style={circle, draw=red!100,fill=red!100,thick, inner sep=0pt, minimum size=2mm},
  blackvertex/.style={circle, draw=black!100,fill=black!100,thick, inner sep=0pt, minimum size=2mm},  
  whitevertex/.style={circle, draw=black!100,fill=white!100,thick, inner sep=0pt, minimum size=2mm},  
  smallblack/.style={circle, draw=black!100,fill=black!100,thick, inner sep=0pt, minimum size=1mm},
  smallwhite/.style={circle, draw=black!100,fill=white!100,thick, inner sep=0pt, minimum size=1mm},    
}
\begin{document}

\makeatletter{\renewcommand*{\@makefnmark}{}
\footnotetext{Research of all four authors was supported by the Natural Sciences and Engineering Research Council of Canada.}\makeatother}
\makeatletter{\renewcommand*{\@makefnmark}{}
\footnotetext{This research was completed while the third author was a student at Thompson Rivers University.}\makeatother}

\maketitle

\begin{abstract}
Let $p$ and $q$ be positive integers with $p/q \geq 2$.
The ``reconfiguration problem'' for circular colourings asks, given two  $(p,q)$-colourings $f$ and $g$ of a graph $G$, is it possible to transform $f$ into $g$ by changing the colour of one vertex at a time such that every intermediate mapping is a $(p,q)$-colouring? We show that this problem can be solved in polynomial time for $2\leq p/q <4$ and that it is \complete{PSPACE} for $p/q\geq 4$. This generalizes a known dichotomy theorem for reconfiguring classical graph colourings. As an application of the reconfiguration algorithm, we show that graphs with fewer than $(k-1)!/2$ cycles of length divisible by $k$ are $k$-colourable.
\end{abstract}

\section{Introduction}
In recent years, a large body of research has emerged concerning so called ``reconfiguration'' variants of combinatorial problems (see, e.g., the survey of van den Heuvel~\cite{survey} and the references therein, and well as~\cite{Ito2011,Ito2014}). These problems are typically of the following form: given two solutions to a fixed combinatorial problem (e.g. two cliques of order $k$ in a graph or two satisfying assignments of a specific \class{$3$-SAT} instance) is it possible to transform one of these solutions into the other by applying a sequence of allowed modifications such that every intermediate object is also a solution to the problem? 

As a specific example, for a fixed integer $k$ and a graph $G$, one may ask the following: given two (proper) $k$-colourings $f$ and $g$ of $G$, is it possible to transform $f$ into $g$ by changing the colour of one vertex at a time such that every intermediate mapping is a $k$-colouring?\footnote{Throughout the paper, the term \emph{$k$-colouring} will refer to a proper $k$-colouring.} In the affirmative we say that $f$ \emph{reconfigures} to $g$. This problem is clearly solvable in polynomial time for $k\leq 2$. Rather surprisingly, Cereceda, van den Heuvel and Johnson~\cite{3col} proved that it is also solvable in polynomial time for $k=3$ despite the fact that determining if a graph admits a $3$-colouring is \complete{NP}. On the other hand, Bonsma and Cereceda~\cite{Bonsma} proved that, when $k\geq 4$, the problem is \complete{PSPACE}. (As pointed out in~\cite{CerecedaThesis}, a similar result was proved by Jakob~\cite{Jakob}, but in his result the integer $k$ is part of the input.) Combining these two results yields the following dichotomy theorem:

\begin{thm}[Cereceda, van den Heuvel and Johnson~\cite{3col}; Bonsma and Cereceda~\cite{Bonsma}]
\label{colourings}
The reconfiguration problem for $k$-colourings is solvable in polynomial time for $k\leq 3$ and is \complete{PSPACE} for $k\geq 4$. 
\end{thm}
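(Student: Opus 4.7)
\medskip

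\noindent\textbf{Proof Proposal.} The plan is to treat the two halves of the dichotomy separately: an algorithm for $k\leq 3$ and a hardness reduction for $k\geq 4$. For $k\leq 2$ the claim is essentially vacuous: on a connected graph, a $2$-colouring is determined up to swapping the two colour classes, no swap is reachable by single-vertex moves without creating a monochromatic edge, and so $f$ reconfigures to $g$ iff $f=g$ on every bipartite component.

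For the interesting case $k=3$, my plan is to exploit the fact that $K_3$ is a cycle and so lifts canonically along the covering $\mathbb{Z}\to\mathbb{Z}/3\mathbb{Z}$. Fix an orientation of $G$. Given a $3$-colouring $f$, set the signed increment of an oriented edge $uv$ to be $+1$ if $f(v)\equiv f(u)+1\pmod 3$ and $-1$ if $f(v)\equiv f(u)-1\pmod 3$. For each cycle $C$ of $G$, the sum of signed increments around $C$ is divisible by $3$; define $w_f(C)\in\mathbb Z$ to be one-third of this sum. A direct local check shows that recolouring a single vertex $v$ preserves $w_f(C)$ for every cycle (the two edges of $C$ incident to $v$ change by $\pm 3$ in a cancelling way), so matching winding numbers is necessary for reconfigurability. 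The core of the algorithm is the converse: if $w_f(C)=w_g(C)$ for every cycle, then $f$ reconfigures to $g$, and a transformation can be produced in polynomial time. I would establish this by induction on the number of disagreements $|\{v:f(v)\neq g(v)\}|$, at each step locating a disagreement vertex $v$ that is \emph{unfrozen} under $f$ (its neighbours do not use both other colours) and whose recolouring strictly decreases the disagreement count while preserving the invariants; the cycle-space equalities are precisely what guarantee that such a vertex exists whenever $f\neq g$.

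For the hardness side, $k\geq 4$, my plan is a polynomial-time reduction from a known \complete{PSPACE} reconfiguration problem, for instance Nondeterministic Constraint Logic of Hearn and Demaine or a reconfiguration variant of Boolean satisfiability. The extra colour freedom when $k\geq 4$ makes it possible to design rigid variable and clause gadgets (typically built around $K_{k-1}$- or $K_k$-like pieces with pendant attachments) whose $k$-colourings, up to permutation of the colours not used by the surrounding ``frame'', encode a discrete truth value; wire gadgets propagate these values, and the permitted single-vertex recolourings in the compound graph $H$ are designed to correspond one-to-one with the allowed moves in the source instance. One then outputs $H$ together with two $k$-colourings encoding the source start and target states.

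The main obstacle is the converse direction for $k=3$: proving that the cycle invariants are also \emph{sufficient} for reconfigurability. This is a global argument requiring that at every intermediate colouring with $f\neq g$, one can always find a disagreement vertex that is unfrozen and whose recolouring keeps the winding numbers intact; handling boundary situations where many candidate vertices are frozen simultaneously is where the proof has to do real work. The \complete{PSPACE} reduction, by contrast, is technically intricate but conceptually follows the now-standard gadget paradigm for reconfiguration hardness.
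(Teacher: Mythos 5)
First, a clarifying point: the paper does not prove Theorem~\ref{colourings}; it cites it (from \cite{3col} and \cite{Bonsma}) and uses it as a black box, so there is no ``paper's own proof'' to compare against directly. What the paper does contain is a generalization of the $k=3$ argument to circular colourings (Theorem~\ref{thm:char}), and that generalization makes a gap in your $k=3$ sketch visible. You claim that equality of cycle winding numbers $w_f(C)=w_g(C)$ for all cycles $C$ is sufficient for $f$ to reconfigure to $g$, and that this ``is precisely what guarantees'' the existence of an unfrozen disagreement vertex. This is false as stated. Take $G=K_3$ with $f=(0,1,2)$ and $g=(1,2,0)$: both colourings have winding number $+1$ around the triangle, so your invariant agrees, yet every vertex is frozen (each sees both other colours on its neighbours) and $f\neq g$, so no reconfiguration is possible. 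The actual characterization, both in \cite{3col} and in its circular generalization here, needs two further conditions beyond cycle weights: the \emph{fixed} vertices (those lying on a directed cycle of the auxiliary digraph $D_f$, or on a directed path between two such cycles) must receive identical colours under $f$ and $g$, and the integer-valued weights of paths between fixed vertices must coincide (not just mod $3$). Your induction ``find an unfrozen disagreement vertex'' breaks exactly at these obstructions, and the real work of the algorithm, reflected in Steps~1 and~2(b) of the paper's Figure~\ref{fig:Algorithm}, is detecting them.

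Two smaller remarks. Your $k\leq 2$ case is fine, though you should say explicitly that for $k=2$ a connected bipartite graph has $D_f$ equal to the whole graph with every edge a directed $2$-cycle, so every vertex is fixed and $f$ reconfigures to $g$ iff $f=g$; that is the degenerate case of the same machinery. For $k\geq 4$, your high-level plan (reduce from a known PSPACE-complete reconfiguration problem via gadgets) is the right shape but quite vague; the Bonsma--Cereceda proof in fact goes through a \emph{list-colouring} reconfiguration intermediate and then simulates lists using ``forbidding paths'' attached to a rigid frame --- a strategy the present paper explicitly borrows and adapts in Section~\ref{circularPSPACE}. Citing ``$K_{k-1}$- or $K_k$-like pieces with pendant attachments'' is not yet a reduction; the hard part is exhibiting gadgets whose single-vertex moves correspond bijectively to moves of the source problem, and your sketch does not engage with that.
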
 

In this paper, we study the complexity of the reconfiguration problem for circular colourings. Given a graph $G$ and positive integers $p$ and $q$ with $p/q\geq2$, a \emph{(circular) $(p,q)$-colouring} of $G$ is a mapping $f:V(G)\to \{0,\dots,p-1\}$ such that 
\begin{equation}\label{circCond}\text{if $uv\in E(G)$, then $q\leq |f(u)-f(v)|\leq p-q$.}\end{equation} 
Clearly, a $(p,1)$-colouring is nothing more than a $p$-colouring and so $(p,q)$-colourings generalize classical graph colourings. Circular colourings were introduced by Vince~\cite{Vince}, and have been studied extensively; see the survey of Zhu~\cite{Zhusurvey}. 
Analogous to that of classical graph colourings, the reconfiguration problem for circular colourings asks, given $(p,q)$-colourings $f$ and $g$ of $G$, whether it is possible to reconfigure $f$ into $g$ by recolouring one vertex at a time while maintaining (\ref{circCond}) throughout. 

Classical graph colourings and circular colourings are both special cases of graph homomorphisms.  Recall, a \emph{homomorphism} from a graph $G$ to a graph $H$ (also called an \emph{$H$-colouring} of $G$) is a mapping $f:V(G)\to V(H)$ such that $f(u)f(v)\in E(H)$ whenever $uv\in E(G)$. The notation $f:G\to H$ indicates that $f$ is a homomorphism from $G$ to $H$. In this language, a $k$-colouring is simply a homomorphism to a complete graph on $k$ vertices. A $(p,q)$-colouring of $G$ is equivalent to a homomorphism from $G$ to the graph $G_{p,q}$ which has vertex set $\{0,\dots,p-1\}$ and edge set $\{ij: q\leq |i-j|\leq p-q\}$. The graph $G_{p,q}$ is called a \emph{circular clique}. 

\begin{rem}
It is well known that $G_{p,q}$ admits a homomorphism to $G_{p',q'}$ if and only if $p/q \leq p'/q'$~\cite{Vince}. Therefore, since the composition of two homomorphisms is a homomorphism, a graph $G$ admits a $(p,q)$-colouring if and only if it admits a $(p',q')$-colouring for all $p'/q'\geq p/q$. 
\end{rem}

Given two homomorphisms $f,g : G \to H$, we say $f$ \emph{reconfigures} to $g$ if there a sequence $(f=f_0), f_1, f_2, \dots, (f_n=g)$ of homomorphisms from $G$ to $H$ such that $f_i$ and $f_{i+1}$ differ on only one vertex. The sequence is referred to as a \emph{reconfiguration sequence}.   Clearly the existence of a reconfiguration sequence from $f$ to $g$ can be determined independently for each component of $G$, so we may assume that $G$ is connected.  We define the general homomorphism reconfiguration problem as follows.  Let $H$ be a fixed graph.

\subsubsection*{\texorpdfstring{$\boldsymbol{H}$}{H}-\textsc{Recolouring}}

\begin{description}\itemsep -3pt
  \item[Instance:] A connected graph $G$, and two homomorphisms $f,g : G \to H$.
  \item[Question:] Does $f$ reconfigure to $g$?
\end{description}

When $H=K_k$ or $H=G_{p,q}$ we will call the problem $k$-\textsc{Recolouring}
and $(p,q)$-\textsc{Recolouring} respectively.  Thus, Theorem~\ref{colourings} is a dichotomy theorem for $k$-\textsc{Recolouring}.  Our main result is a dichotomy theorem for $(p,q)$-\textsc{Recolouring}:

\begin{thm}
\label{circularThm}
Let $p,q$ be fixed positive integers with $p/q \geq 2$.  Then the $(p,q)$-\textsc{Recolouring} problem is solvable in polynomial time for $2\leq p/q <4$ and is \complete{PSPACE} for $p/q\geq 4$. 
\end{thm}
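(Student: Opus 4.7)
The plan is to separate the proof into the algorithmic regime $2\leq p/q<4$ and the hardness regime $p/q\geq 4$; we may assume $\gcd(p,q)=1$. The extreme cases $p/q=2$ (trivially equivalent to $2$-colouring reconfiguration) and $p/q=3$ (the easy part of Theorem~\ref{colourings}) are already handled, so the work lies in the strictly circular ratios $p/q\in(2,3)\cup(3,4)$ on the easy side and $p/q\geq 4$ on the hard side.

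For the algorithmic regime my strategy is to assign a topological invariant to each $(p,q)$-colouring and to characterise reconfiguration in its terms. Concretely, for a $(p,q)$-colouring $f$ of $G$ and a closed walk $W=v_0v_1\cdots v_\ell=v_0$, define the \emph{winding number} $\wt_f(W)\in\mathbb{Z}$ to be $\tfrac{1}{p}\sum_{i=0}^{\ell-1}\tilde d(f(v_i),f(v_{i+1}))$, where $\tilde d(a,b)$ denotes the unique integer in $(-p/2,p/2]$ congruent to $b-a\pmod p$. The key lemma, which relies essentially on the strict inequality $p/q<4$, is that $\wt_f(W)$ is preserved under any single-vertex recolouring of $f$. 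The algorithm then computes $\wt_f$ and $\wt_g$ on a cycle basis of $G$ (in polynomial time) and accepts iff they coincide on every basis cycle. The non-trivial direction is showing that this necessary condition is also sufficient, for which I would construct a reconfiguration sequence from $f$ to $g$ by pushing colours one vertex at a time along a spanning tree of $G$, using the slack afforded by $p/q<4$ to ensure the required recolourings are always available, in the spirit of the homotopy-style argument of Cereceda, van den Heuvel and Johnson for $k=3$.

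For the hardness regime I would reduce $4$-\textsc{Recolouring}, which is \complete{PSPACE} by Theorem~\ref{colourings}, to $(p,q)$-\textsc{Recolouring}. Since $p/q\geq 4$ there is a homomorphism $K_4=G_{4,1}\to G_{p,q}$, so we may fix four pairwise adjacent colours $c_0,c_1,c_2,c_3\in V(G_{p,q})$. Given a $4$-\textsc{Recolouring} instance $(G,f,g)$, construct $(G',f',g')$ by attaching to each vertex $v\in V(G)$ a \emph{colour-restricting gadget} $H_v$ with a distinguished attachment vertex glued to $v$, together with pendant pinning vertices whose colours are fixed and identical in $f'$ and $g'$ by auxiliary subgadgets. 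The gadget $H_v$ is designed so that in every $(p,q)$-colouring of $G'$ consistent with the pinnings, the colour of $v$ lies in $\{c_0,c_1,c_2,c_3\}$; we then set $f'(v)=c_{f(v)}$ and $g'(v)=c_{g(v)}$, and verify that reconfiguration sequences in $G'$ correspond (up to gadget-internal moves) to reconfiguration sequences in $G$.

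The main obstacle is the gadget construction, which must be \emph{rigid}, in that no reachable $(p,q)$-colouring of $G'$ ever assigns $v$ a colour outside $\{c_0,c_1,c_2,c_3\}$, while simultaneously being \emph{flexible}, in that every legal $4$-colouring move on $v$ in $G$ lifts to a sequence of $(p,q)$-moves on $G'$ touching only $v$ and the interior of $H_v$. For $p/q>4$ strictly, the spare colours of $G_{p,q}$ threaten rigidity and must be eliminated by carefully engineered forcing substructure inside $H_v$; I expect balancing these two requirements to be the most delicate step, generalising the Bonsma--Cereceda gadgetry for $K_4$ to the full circular regime.
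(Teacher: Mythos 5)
Your plan for the tractable range $2\le p/q<4$ identifies the right numerical invariant but not the right characterization of reconfigurability. The winding number $\wt_f$ is, up to a choice of signed representatives, the same quantity as the paper's cycle weight $\varphi_f(C)$, and it is indeed preserved under single-vertex recolourings in this range. But equality of all cycle weights is \emph{necessary and not sufficient}, so an algorithm that accepts whenever $\wt_f$ and $\wt_g$ agree on a cycle basis is wrong. The missing obstruction comes from \emph{frozen vertices}: those lying on a directed cycle in the digraph $D_f$ of tight arcs $\overrightarrow{xy}$ with $f(y)-f(x)\equiv q\pmod p$. Such a vertex can never be recoloured, so one must also check that $f$ and $g$ agree on every frozen vertex and that every path between two frozen vertices has equal weight under $\varphi_f$ and $\varphi_g$. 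For a concrete counterexample to sufficiency, take $p=5$, $q=2$, and let $G$ be two $5$-cycles joined by a single bridge; in $f$ colour each cycle cyclically by $0,2,4,1,3$, and in $g$ keep the first cycle but rotate the colours of the second so the bridge stays proper. Then every vertex lies on a directed $5$-cycle of $D_f$ and is frozen, $f\neq g$, and there are no valid moves at all, yet $f$ and $g$ induce identical weights on every cycle because the bridge lies on no cycle. The correct characterization is Theorem~\ref{thm:char} of the paper, which has three conditions, not one, and your ``push along a spanning tree'' phase would have to detect both the fixed vertices and the path-weight obstruction before pushing.

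For the range $p/q\ge 4$ your framework (pin a copy of $G_{p,q}$, attach gadgets, reduce from a hard classical recolouring instance) matches the paper's, but the gadget you sketch is qualitatively different and there is a structural reason to doubt it can be made to work. You ask for a \emph{vertex} gadget $H_v$ that rigidly confines $v$ to a scattered four-element set $\{c_0,c_1,c_2,c_3\}$ of pairwise adjacent colours. The obstacle is that the only constraint a pinned vertex of colour $a$ places on a neighbour is exclusion of the arc $(a-q,a+q)$ of length $2q-1$; for $4\le p/q<8$ these arcs are wider than the gaps between consecutive $c_i$, so any pin excluding a colour strictly between $c_i$ and $c_{i+1}$ necessarily excludes one of $c_i,c_{i+1}$ as well, and nonpinned intermediaries do not help because $G_{p,q}$ has diameter two once $p/q\ge 4$, so indirect constraints dissipate. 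The paper sidesteps this entirely by \emph{not} confining vertices to discrete colour sets: it partitions $\{0,\dots,p-1\}$ into $k=\lfloor p/q\rfloor$ consecutive intervals $S_0,\dots,S_{k-1}$, lets each vertex of $G$ float inside its interval, and attaches list-constrained \emph{forbidding paths} to each \emph{edge} $uv$ of $G$ to prevent $u$ and $v$ from simultaneously lying in $S_0$, the unique interval that spans an edge of $G_{p,q}$. This interval-valued, edge-guarded reduction is what achieves both the rigidity and the flexibility you correctly identify as the key design requirements; a point-valued, vertex-guarded gadget of the kind you describe does not fit the arc-based adjacency structure of $G_{p,q}$.
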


The complexity of $H$-\textsc{Recolouring} is only known for a handful of families of targets. Theorem~\ref{colourings} is a dichotomy theorem for the family of complete graphs and Theorem~\ref{circularThm} is a dichotomy theorem for the family of circular cliques. Recently, Wrochna~\cite{C4free} (see also~\cite{Marcin}) proved that $H$-\textsc{Recolouring} is polynomial whenever $H$ does not contain a $4$-cycle. In contrast, one can observe that $G_{p,q}$ contains $4$-cycles whenever $p>2q+1$ and so the polynomial side of Theorem~\ref{circularThm} does not follow directly from the result of Wrochna. In a follow-up paper~\cite{followUp}, we determine the complexity of $H$-\textsc{Recolouring} for several additional classes of graphs including, for example, odd wheels.

The rest of the paper is outlined as follows. In Section~\ref{circularPoly}, we provide an explicit polynomial-time algorithm for deciding the $(p,q)$-\textsc{Recolouring} problem when $2\leq p/q <4$. In Section~\ref{circularPSPACE}, we show that, when $p/q\geq 4$, the reconfiguration problem for $\left\lfloor p/q\right\rfloor$-colourings can be reduced to the reconfiguration problem for $(p,q)$-colourings, thereby completing the proof of Theorem~\ref{circularThm} (via Theorem~\ref{colourings}). We close the paper by presenting an unpublished argument of Wrochna which uses a result of~\cite{3col} (on which our algorithm is based) to show that graphs with no cycle of length $0\bmod 3$ are $3$-colourable. This result was originally proved by Chen and Saito~\cite{Chen1994}. We then generalize Wrochna's argument to show that graphs with chromatic number greater than $k$ must contain at least $\frac{(k-1)!}{2}$ distinct cycles of length $0\bmod k$ and conjecture a stronger bound.

\section{The Polynomial Cases: \texorpdfstring{$\boldsymbol{2 \leq p/q < 4}$}{2 <= p/q < 4}} 
\label{circularPoly}

We now extend the ideas of~\cite{3col} to study the complexity of $(p,q)$-\textsc{Recolouring} for $2 \leq p/q < 4$.  Given two $3$-colourings $f$ and $g$ of a graph $G$, the algorithm in~\cite{3col} consists of two phases.   The first phase tests whether the so-called ``fixed vertices'' (vertices that cannot be recoloured under \emph{any} sequence of recolourings) are assigned the same colours under $f$ and $g$.  If not, then we know that $f$ does not reconfigure to $g$ and the algorithm terminates. 

If the algorithm does not terminate at this point, then it enters the second phase. In this phase, the algorithm obtains two edge labellings of $G$ from the vertex colourings $f$ and $g$. The key property of these labellings is that, given that the algorithm did not terminate after the first phase, one can show that $f$ reconfigures to $g$ if and only if every cycle of $G$ has the same ``weight'' under both edge labellings. After a polynomial number of steps, the algorithm either produces a reconfiguration sequence or discovers a cycle with different weights under $f$ and $g$.  We follow a similar approach, but we begin by examining the edge relabelling problem which turns out to depend only on cycle weights.  

\begin{rem}
The work in~\cite{3col} has been extended in~\cite{Johnson2014} where the authors find shortest paths between 3-colourings.  In our work, we do not consider shortest paths, but rather are simply concerned with testing the existence of paths in polynomial time. 
\end{rem}

\subsection{Edge Relabellings}

% Jan 29, 2016: change -p/2 \leq alpha \leq p/2.  Remove Z_p and replace with {0, 1, \dots, p-1}.  Require p/q < 4

Let $f$ be a $(p,q)$-colouring of a graph $G$.  Let $\overrightarrow{G}$ be an oriented graph obtained by arbitrarily orienting each edge of $G$. (This orientation is required to define the weighting of paths, cycles, and cuts below.  Thus throughout this section we will assume a graph $G$ has an orientation $\overrightarrow{G}$.)
The \emph{edge-labelling induced by $f$} is defined as $\varphi_f : E(G) \to \{ 0, 1, \dots, p-1 \}$ by $\varphi_f(e) = f(v) - f(u) \bmod{p}$ where $e = \overrightarrow{uv}$.  Let $C$ be a cycle of $G$ and arbitrarily choose a direction of traversal.  Let $\for{C}$ be those edges of $C$ whose orientation in $\overrightarrow{G}$ agrees with the direction of traversal (forward arcs) and $\bac{C}$ be those edges of $C$ whose orientation in $\overrightarrow{G}$ is reversed to the direction of traversal (backward arcs).  Define
\begin{equation}\label{eqn:cyclesum}
\varphi_f(C) = \sum_{e \in \for{C}} \varphi_f(e) + \sum_{e \in \bac{C}} (p - \varphi_f(e)).
\end{equation}
Note that the summation above is in $\mathbb{Z}$, i.e. it is not reduced modulo $p$.  The following properties of $\varphi_f$ are immediate from the fact that $f$ is a $(p,q)$-colouring and the sum $\varphi_f(C)$ is a telescoping series in the values of $f$ on $C$.
\begin{description}
  \item[P1] $q \leq \varphi_f(e) \leq p-q$ for all edges $e$.
  \item[P2] $\varphi_f(C) \equiv 0 \pmod{p}$ for all cycles $C$.
\end{description}
Given a graph $G$, an edge-labelling $\psi : E(G) \to \{ 0, 1, \dots, p-1 \}$ is a \emph{$(p,q)$-labelling} if it satisfies properties \textbf{P1} and \textbf{P2} above.  
%(Clearly $\psi$ is a group labelling of $E(G)$ with the group $\mathbb{Z}_p$.  The discussions here are easily generalized to arbitrary abelian groups.)  
Similar to the weighting function for cycles in~\eqref{eqn:cyclesum}, we define a weighting for paths: $\varphi(P)$ is the sum of $\varphi(e)$ and $p-\varphi(e)$ for forward and backward arcs $e$ in $P$, respectively, according to some chose direction of traversal on $P$.
%Let $P$ be a path from $u$ to $v$ in $\overrightarrow{G}$.  Let $\for{P}$ be those edges of $P$ whose direction agrees the direction of traversal from $u$ to $v$, and $\bac{P}$ be the other (backwards) edges.  Define $\varphi_f(P) = \sum_{e \in \for{P}} \varphi_f(e) + \sum_{e \in \bac{P}} p - \varphi_f(e)$.

We now introduce the reconfiguration process for edge-labellings.  Let $G$ be a graph and $\overrightarrow{G}$ an orientation of $G$.  For $\emptyset \neq X \subsetneq V(G)$, the \emph{edge cut} $\partial(X)$ is the set of edges with one end in $X$
and the other in $\overline{X}$. Let $\partial^+(X)$ (resp. $\partial^-(X)$) be those edges oriented from $X$ to $\overline{X}$ (resp. $\overline{X}$ to $X$) in $\overrightarrow{G}$. Given a $(p,q)$-labelling $\varphi: E(G) \to \{ 0, 1, \dots, p-1 \}$, let $\alpha$
be an integer $1 \leq \alpha \leq p-1$, where $\varphi(e) \geq q + \alpha$ for $e \in \partial^+(X)$ and $\varphi(e) \leq p-q-\alpha$ for $e \in \partial^-(X)$. The labelling $\varphi'$ obtained from $\varphi$ by \emph{relabelling on $\partial(X)$ by $\alpha$} is defined by:
$$
\varphi'(e) = \left\{ \begin{array}{ll} 
\varphi(e) - \alpha & \mbox{ if } e \in \partial^+(X), \\ 
\varphi(e) + \alpha & \mbox{ if } e \in \partial^-(X) \end{array} \right.
$$
This process is called an \emph{edge cut relabelling}.  Clearly if $\varphi$ is a $(p,q)$-labelling, then $\varphi'$ is as well.  It is easy to verify that for any cycle $C$, $\varphi(C) = \varphi'(C)$.

We remark an alternative relabelling definition is to simply require that if $\varphi$ is a $(p,q)$-labelling, then $\varphi'$ is as well (with no requirement that $\varphi(e) \geq q + \alpha$ for $e \in \partial^+(X)$ and $\varphi(e) \leq p-q-\alpha$ for $e \in \partial^-(X)$).  In this case one would naturally reduce $\varphi'$ modulo $p$ after shifting by $\alpha$.  With this more general definition the weight of a cycle can change after relabelling.  For example, consider a directed $4$-cycle with $p=4$ and $q=1$.  The constant labelling of $1$ on each edge is a $(4,1)$-labelling.  The weight of this cycle is $4$ (when traversed in the direction of the edges).  Using $\alpha=2$, one can relabel to obtain the constant labelling $\varphi'(e)=3$ for all edges: simply add $2$ to the first and third edges, and $-2$ to the second and fourth edges.  After reducing the weights modulo $4$, we see $\varphi'(e)=3$ for all edges.  The weight of the cycle is now $12$. Nonetheless, the two notions of relabelling are equivalent with the assumption $p/q < 4$ and the natural restriction $-p/2 \leq \alpha \leq p/2$.  We will use the first definition as it eases the analysis below.

%Throughout this section the weighting of cycles, paths, and the orientation of edge cuts all require that an orientation has been applied to $G$.  We will explicitly assume this throughout the remainder of this section.

\begin{thm}\label{thm:cocycle}
Let $G$ be a connected graph and $\varphi, \psi$ be two $(p,q)$-edge-labellings of $G$.  Then $\varphi$ reconfigures to $\psi$ through a sequence of edge cut relabellings if and only if $\varphi(C) = \psi(C)$ for all cycles $C$ in $G$. 
\end{thm}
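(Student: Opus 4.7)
The forward direction is immediate from the observation, noted just before the theorem, that $\varphi(C) = \varphi'(C)$ for every cycle $C$ under any single edge cut relabelling; iterating along a reconfiguration sequence from $\varphi$ to $\psi$ yields $\varphi(C) = \psi(C)$ for every $C$.

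For the reverse direction, my plan is to build the reconfiguration sequence explicitly from an integer-valued potential on $V(G)$. Define $\delta(e) = \varphi(e) - \psi(e) \in \mathbb{Z}$ (not reduced modulo $p$). A telescoping computation directly from \eqref{eqn:cyclesum} gives
\[
\varphi(C) - \psi(C) \;=\; \sum_{e \in \for{C}} \delta(e) \;-\; \sum_{e \in \bac{C}} \delta(e)
\]
for every cycle $C$, so the hypothesis is equivalent to the signed sum of $\delta$ around every cycle of $G$ being zero. Since $G$ is connected, fixing a base vertex $v_0$ and taking signed sums of $\delta$ along paths from $v_0$ produces a well-defined function $h : V(G) \to \mathbb{Z}$ with $\delta(e) = h(v) - h(u)$ for every $e = \overrightarrow{uv}$. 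It therefore suffices to drive $h$ to a constant by a sequence of admissible edge cut relabellings, since then $\delta \equiv 0$ and $\varphi = \psi$.

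To do this, set $M = \max_v h(v)$, $L = \{v : h(v) = M\}$, and $X = V(G) \setminus L$; if $X = \emptyset$ then $h$ is constant and we are done. Otherwise I relabel on $\partial(X)$ with $\alpha = 1$. The crucial admissibility check uses the level-set choice: for $e = \overrightarrow{uv} \in \partial^+(X)$ we have $u \in X$ and $v \in L$, so $\delta(e) = M - h(u) \geq 1$, which gives $\varphi(e) = \psi(e) + \delta(e) \geq q + 1$; symmetrically, for $e \in \partial^-(X)$ we obtain $\delta(e) \leq -1$ and hence $\varphi(e) \leq p - q - 1$. A short calculation shows that the resulting labelling $\varphi'$ is the one induced by the updated potential $h' = h + \mathbf{1}_X$, which agrees with $h$ on $L$ and exceeds $h$ by $1$ on $X$. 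Consequently the monovariant $\max h - \min h$ strictly decreases by $1$ at each step, so the procedure terminates with $\varphi = \psi$ in at most $\max h - \min h$ iterations.

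The main obstacle I anticipate is precisely the admissibility verification; everything else (existence of $h$, monotone termination, applying the same prescription to $h'$ at every stage) is routine. The level-set choice of $X$ is exactly what converts the integer bounds on $\delta$ into the two inequalities $\varphi(e) \geq q + \alpha$ on $\partial^+(X)$ and $\varphi(e) \leq p - q - \alpha$ on $\partial^-(X)$ required by the definition of an edge cut relabelling, so no further hypothesis on $p/q$ is needed at this stage.
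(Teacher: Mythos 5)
Your proof is correct. The forward direction matches the paper's. For the reverse direction you and the paper are working with essentially the same object, but packaged differently and with genuinely different details. The paper picks a specific vertex $u$ (an endpoint of an edge where $\varphi$ and $\psi$ disagree), builds a spanning tree rooted at $u$, and sets $X=\{v:\wt(v,\varphi)\le\wt(v,\psi)\}$; in your notation this is the sub-level set $\{v: h(v)\le h(u)\}$ of the same potential $h$ (rooted at $u$), and the admissibility of the cut relabelling is established via a contradiction argument about walks and cycle weights. You instead construct $h$ directly from path-independence of the signed $\delta$-circulation (which is exactly the hypothesis), cut at the top level set $X=V\setminus\arg\max h$, always use $\alpha=1$, and terminate via the strict decrease of the monovariant $\max h - \min h$, whereas the paper takes $\alpha=\min_{e\in\partial(X)}|\varphi(e)-\psi(e)|$ and terminates by increasing the number of edges on which the two labellings agree. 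Your version is a cleaner, more self-contained exposition of the idea and makes the admissibility check entirely mechanical from $\delta(e)=h(v)-h(u)$ together with property \textbf{P1} for $\psi$. The one thing the paper's formulation buys that yours does not directly supply is the structural conclusion exploited in Corollary~\ref{cor:fundamentalCycle}: when the cycle condition fails, a witnessing cycle can be taken to be a fundamental cycle $T+e$ with $e\in\partial(X)$, which the algorithm needs in order to return an explicit obstruction. That is not part of the theorem statement, so it does not affect the correctness of your argument, but it explains why the published proof is phrased via a spanning tree.
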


\begin{proof}
Relabelling on an edge cut does not change the weight of any cycles.  Thus, if $\varphi$ reconfigures to $\psi$, then $\varphi(C) = \psi(C)$ for all cycles $C$ is a necessary condition.  To prove sufficiency, assume $\varphi(C) = \psi(C)$ for each cycle $C$. We may assume $\varphi \neq \psi$. Let $\overrightarrow{wu}$ be an arc such that $\varphi(wu) < \psi(wu)$. (The case where $\varphi(wu) > \psi(wu)$ is analogous.) Let $T$ be a spanning tree of $G$ rooted at $u$ with all arcs directed away from $u$.  For each $v \in V(G)$ define $\wt(v,\varphi) = \varphi(P)$ where $P$ is the $(u, v)$ path in $T$, traversed from $u$ to $v$.  Similarly define $\wt(v,\psi)$. 

%Suppose $\wt(v, \varphi) > \wt(v, \psi)$ for some $v$. 
Let $X = \{ v : \wt(v, \varphi) \leq \wt(v, \psi) \}$.  Thus, $\overline{X} = \{ v :  \wt(v, \varphi) > \wt(v, \psi) \}$.  Clearly $u \in X$ and it is easy to see $w \in \overline{X}$.  Suppose to the contrary $w \in X$.  Then $uw$ is not an edge of $T$ and thus the $uw$ path in $T$ plus the edge $wu$ is a cycle $C$.  Moreover this cycle must have $\varphi(C) < \psi(C)$ when  traversed with the direction of the path $P$, contrary to our assumption.  Hence $\emptyset \neq X \subsetneq V(G)$ and $\partial(X)$ is a well defined edge cut.

We claim that $\partial^+(X)$ must only contain arcs where $\varphi(e) > \psi(e)$ and arcs in $\partial^-(X)$ must have $\varphi(e) < \psi(e)$.  Suppose to the contrary there is an edge $e=\overrightarrow{xy}, x \in X, y \in \overline{X}$ such that $\varphi(e) \leq \psi(e)$.  (The proof for arcs in $\partial^-(X)$ analogous.)  By construction there is a $(u,x)$-path $P_1$ such that $\varphi(P_1) \leq \psi(P_1)$.  Thus $P_1+y$ is an $(u,y)$-path such that $\varphi(P_1+y) \leq \psi(P_1+y)$.  On the other hand, since $y \in \overline{X}$, the path $(u,y)$ path in $T$, say $P_2$, has the property that $\varphi(P_2) > \psi(P_2)$.  Hence, reverse of the path is a $(y,u)$-path, $P_2^R$ such that $\varphi(P_2^R) < \psi(P_2^R)$.  The concatenation of $P_1+y$ and $P_2^R$ is a closed $(u,u)$-walk with less weight under $\varphi$ than $\psi$.  In particular, it must contain a cycle $C$ such that $\varphi(C) \neq \psi(C)$, a contradiction. Let $\alpha = \min_{e \in \partial(X)} \{ |\varphi(e)-\psi(e)| \}$.  Relabel $\partial(X)$ by $\alpha$ to obtain $\varphi'$.  Now $\varphi'$ agrees with $\psi$ on more edges than $\varphi$. The result follows.
%If $\wt(v, \varphi) \leq \wt(v, \psi)$ for all $v$, then either we have strict inequality for some vertex $v$ and we use an analogous argument to above to increase the set of edges upon which $\varphi$ and $\psi$ agree, or we have equality for all edges in which case $\varphi=\psi$.  The result follows.
\end{proof}

Observe the second half of the proof of Theorem~\ref{thm:cocycle} shows that if there is a cycle $C$ with different weights under $\varphi$ and $\psi$, then $C$ is the fundamental cycle in $T+e$ where $e \in \partial(X)$.  The discovery of such a cycle through examining fundamental cycles plays a key role in our polynomial time algorithm.

\begin{cor}\label{cor:fundamentalCycle}
Let $G$ be a graph together with two $(p,q)$-edge-labellings $\varphi$ and $\psi$. Either $\varphi$ reconfigures to $\psi$ through a sequence of edge cut relabellings or there is a cycle $C$ for which $\varphi(C) \neq \psi(C)$.  Morever, in the latter case $C$ may be taken to be the fundamental cycle in $T+e$ where $T$ is a fixed spanning tree of $G$, and $e \in \partial(X)$ as defined in the proof of Theorem~\ref{thm:cocycle}. Determining which of the two cases (exclusively) holds can be determined in polynomial time.  
\end{cor}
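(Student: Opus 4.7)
My plan is to iterate the construction in the proof of Theorem~\ref{thm:cocycle}. Fix a spanning tree $T$ of $G$ rooted at some vertex $u$. In each round, if $\varphi \neq \psi$, I would pick an arc $\overrightarrow{wu}$ on which they disagree, compute the tree weights $\wt(v,\varphi)$ and $\wt(v,\psi)$ for every vertex $v$, form the set $X$ exactly as in the proof of the theorem, and inspect $\partial(X)$. If the sign conditions required by the theorem hold on all of $\partial(X)$, I perform the edge cut relabelling on $\partial(X)$ by $\alpha = \min_{e \in \partial(X)} |\varphi(e) - \psi(e)|$ and continue; otherwise I output a fundamental cycle witness as described below.

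To bound the number of rounds, note that the choice of $\alpha$ forces at least one edge of $\partial(X)$ to become an agreeing edge after relabelling, and, by the proof of Theorem~\ref{thm:cocycle}, every $e \in \partial(X)$ already had $\varphi(e) \neq \psi(e)$, so the relabelling does not disturb any already-agreeing edge. Hence the number of agreeing edges strictly increases each round, and the process terminates after at most $|E(G)|$ rounds. Each round only requires computing $T$, the path-weights from $u$, the set $X$, and the cut $\partial(X)$, all of which run in time polynomial in $|V(G)| + |E(G)|$, so the overall running time is polynomial.

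The main task is to show that when the sign conditions fail, a fundamental cycle in $T + e$ (for the offending $e$, or for the initial arc $\overrightarrow{wu}$ when $w \in X$) actually witnesses $\varphi(C) \neq \psi(C)$. Writing $\delta(e) = \varphi(e) - \psi(e)$ and $\Delta(v) = \wt(v,\varphi) - \wt(v,\psi)$, so that $X = \{v : \Delta(v) \leq 0\}$, I would verify by a telescoping calculation (with the tree-path contributions from $u$ to the least common ancestor of $x$ and $y$ cancelling out) that for any non-tree arc $e = \overrightarrow{xy}$, the fundamental cycle $C$ in $T + e$, traversed from $x$ along $e$ to $y$ and then back to $x$ through $T$, satisfies
\[
\varphi(C) - \psi(C) \;=\; \delta(e) + \Delta(x) - \Delta(y).
\]
If the procedure fails on $e \in \partial^+(X)$ then $\delta(e) \leq 0$, $\Delta(x) \leq 0$, and $\Delta(y) > 0$, forcing the right-hand side to be strictly negative; the $\partial^-(X)$ case is symmetric, and the initial case $w \in X$ is the same identity applied to $x = w$, $y = u$. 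The main obstacle is simply keeping track of signs (in particular, which cycle edges are forward and which are backward) in the telescoping identity; once this is pinned down, the corollary follows by combining this witness analysis with the polynomial-time iteration above.
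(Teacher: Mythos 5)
Your proof is correct and follows the same approach the paper relies on (the corollary is stated as a consequence of the proof of Theorem~\ref{thm:cocycle}, with no separate argument given). You make two things more explicit than the paper does: the telescoping identity $\varphi(C)-\psi(C)=\delta(e)+\Delta(x)-\Delta(y)$, which pins down that the violating cycle is exactly the fundamental cycle in $T+e$ (the paper only argues via a closed walk that ``must contain a cycle'' and then asserts in a remark that it is the fundamental cycle), and the monotone invariant that the set of agreeing edges strictly grows, which yields the $O(|E(G)|)$ bound on the number of rounds. Both of these are sound; in particular, the sign analysis for the $\partial^+(X)$, $\partial^-(X)$, and initial $w\in X$ cases is right, and you have correctly used that the choice of $\alpha=\min_{e\in\partial(X)}|\varphi(e)-\psi(e)|$ keeps the relabelling feasible and does not flip the sign of any $\delta(e)$ on $\partial(X)$.
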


\subsection{Vertex Recolouring}

We now return to recolouring vertices.  We have already observed that given a $(p,q)$-colouring $f$ of a graph, we can construct a $(p,q)$-edge labelling $\varphi_f$.  Conversely given $\varphi$, a $(p,q)$-edge labelling, using the ideas from the proof of Theorem~\ref{thm:cocycle}, we can generate a weight, $\wt(v,\varphi)$, for each vertex $v$.  Reducing these weights modulo $p$ yields a $(p,q)$-colouring $f$ where $\varphi = \varphi_f$.  The following proposition is immediate. Throughout this section addition (for shifting colours) is done modulo $p$.

\begin{prop}\label{prop:inducedlabelling}
Given a connected graph $G$, an edge-labelling $\psi : E(G) \to \{ 0, 1, \dots, p-1 \}$ is a $(p,q)$-labelling if and only if $\psi = \varphi_f$ for some $f : G \to G_{p,q}$.  Moreover, if $\varphi_f = \varphi_g$ for $f,g: G \to G_{p,q}$, then there exists $k\in\{0,1,\dots,p-1\}$ such that $f(v)=g(v) + k$ for all $v\in V(G)$.
\end{prop}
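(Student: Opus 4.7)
The plan is to handle the three assertions separately: (i) $\varphi_f$ is a $(p,q)$-labelling for any $(p,q)$-colouring $f$, (ii) every $(p,q)$-labelling arises this way, and (iii) two colourings inducing the same labelling differ by a constant shift. Part (i) is essentially already recorded: properties \textbf{P1} and \textbf{P2} were verified right after the definition of $\varphi_f$, using the edge condition~(\ref{circCond}) for P1 and the telescoping structure of the sum~\eqref{eqn:cyclesum} for P2.

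For (ii), I would construct $f$ from $\psi$ by fixing a root $u\in V(G)$, setting $f(u)=0$, and defining $f(v) = \wt(v,\psi) \bmod p$, where $\wt(v,\psi)$ is computed along any $(u,v)$-path using the forward/backward convention already introduced for paths in this section. The essential point is that this value does not depend on the chosen path. Given two $(u,v)$-paths $P_1$ and $P_2$, their concatenation $P_1$ followed by the reverse of $P_2$ is a closed $(u,u)$-walk, which decomposes into cycles. Since, on a reversed arc, $\psi$ contributes $p - \psi(e)$, reversing a path negates its weight. Applying \textbf{P2} to each cycle in the decomposition then gives $\psi(P_1) \equiv \psi(P_2) \pmod p$, so $f$ is well-defined. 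To see that $f$ is a $(p,q)$-colouring with $\varphi_f=\psi$, note that for any arc $\overrightarrow{xy}$, extending the $(u,x)$-path defining $f(x)$ by $\overrightarrow{xy}$ gives $f(y) \equiv f(x) + \psi(xy) \pmod p$, so $\varphi_f(xy)=\psi(xy)$; the edge constraint~(\ref{circCond}) then follows immediately from \textbf{P1}.

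For (iii), suppose $\varphi_f=\varphi_g$. For every arc $\overrightarrow{xy}$, we have $f(y)-f(x) \equiv g(y)-g(x) \pmod p$, equivalently $f(y)-g(y) \equiv f(x)-g(x) \pmod p$. Since $G$ is connected, the quantity $f(v)-g(v) \bmod p$ is the same for every $v\in V(G)$; taking $k$ to be this common value completes the argument.

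The only step that requires actual verification rather than unpacking of definitions is the well-definedness of $f$ in part (ii), and even there the work is confined to decomposing a closed walk into cycles and invoking \textbf{P2}, so I do not anticipate any genuine obstacle.
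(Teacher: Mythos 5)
Your proof is correct and takes essentially the same approach the paper intends: the paper only gestures at the argument (``using the ideas from the proof of Theorem~\ref{thm:cocycle}\dots\ The following proposition is immediate''), and your write-up fills in exactly those details --- defining $f(v)$ as the reduced path weight from a root, establishing well-definedness via closed-walk/cycle decomposition together with \textbf{P2}, verifying $\varphi_f=\psi$ edge by edge, reading off the edge constraint from \textbf{P1}, and deducing the uniqueness-up-to-shift from connectivity.
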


To connect our work on edge cut relabellings back to vertex recolourings, we must realize edge cut relabelling through vertex recolourings where we recolour one vertex at a time.  First, we observe that relabelling $\varphi$ on $\partial(X)$ by $\alpha$ corresponds to the following vertex recolouring where we recolour an entire set of vertices (simultaneously).  Let $f'$ be the colouring obtained from $f$ by:
$$
f'(v) = \left\{ \begin{array}{ll} f(v)+\alpha & \mbox{ if } v \in X, \\
                                  f(v) & \mbox{ otherwise}.
                \end{array} \right.
$$
Then $\varphi_{f'}$ is the edge-labelling obtained from $\varphi_f$ by relabelling on $\partial(X)$ by $\alpha$.  We call this process \emph{recolouring the vertex set $X$ by $\alpha$}. Provided $\varphi(e) \geq q+\alpha$ for all $e \in \partial^+(X)$ and $\varphi(e) \leq p-q-\alpha$ for all $e \in \partial^-(X)$, the new colouring $f'$ is a proper $(p,q)$-colouring.  The following corollary is immediate from Proposition~\ref{prop:inducedlabelling} and Theorem~\ref{thm:cocycle}.

\begin{cor}\label{cor:cyclewts}
Let $f$ and $g$ be two $(p,q)$-colourings of a graph $G$. Then there is a sequence of colourings $f, f_1, f_2, \dots, f_n$, each obtained from its predecessor by a vertex set recolouring, where $f_n(v) = g(v) + k$ for all vertices $v$ and some constant $k$ if and only if $\varphi_f(C) = \varphi_g(C)$ for all cycles $C$ in $G$. 
\end{cor}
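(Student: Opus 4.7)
The plan is to combine Theorem~\ref{thm:cocycle} with Proposition~\ref{prop:inducedlabelling} via the correspondence observed just before the corollary: if $f'$ is obtained from $f$ by recolouring the vertex set $X$ by $\alpha$, then $\varphi_{f'}$ is obtained from $\varphi_f$ by edge cut relabelling on $\partial(X)$ by $\alpha$. Consequently, vertex set recolouring sequences starting from $f$ are in bijection with edge cut relabelling sequences starting from $\varphi_f$, with the $i$-th colouring $f_i$ corresponding to the $i$-th labelling $\varphi_{f_i}$.

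For the forward direction, suppose such a sequence $f = f_0, f_1, \dots, f_n$ exists with $f_n(v) = g(v) + k$ for all $v$ and some constant $k$. Colourings differing by a global constant induce identical edge-labellings, so $\varphi_{f_n} = \varphi_g$. The induced sequence of edge cut relabellings transforms $\varphi_f$ into $\varphi_g$, and since edge cut relabellings preserve cycle weights, $\varphi_f(C) = \varphi_g(C)$ for every cycle $C$.

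For the converse, assume $\varphi_f(C) = \varphi_g(C)$ for every cycle $C$. Theorem~\ref{thm:cocycle} supplies a sequence of edge cut relabellings carrying $\varphi_f$ to $\varphi_g$. Lifting this sequence via the correspondence above produces a sequence of colourings $f = f_0, f_1, \dots, f_n$, each a proper $(p,q)$-colouring (the side conditions $\varphi(e) \geq q + \alpha$ on $\partial^+(X)$ and $\varphi(e) \leq p - q - \alpha$ on $\partial^-(X)$ required to define the edge cut relabelling are exactly the conditions needed to legally recolour $X$ by $\alpha$), and satisfying $\varphi_{f_n} = \varphi_g$. Proposition~\ref{prop:inducedlabelling} then gives $f_n(v) = g(v) + k$ for some $k \in \{0, 1, \dots, p-1\}$, as required.

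The argument is essentially bookkeeping once the edge-labelling machinery is in place; the only point that might seem to require scrutiny is that the lifted recolouring sequence is legal, but this is automatic because the admissibility conditions for an edge cut relabelling coincide with the conditions for a vertex set recolouring to remain a $(p,q)$-colouring.
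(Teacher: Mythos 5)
Your argument is correct and follows exactly the route the paper has in mind: the paper states the corollary is ``immediate'' from Proposition~\ref{prop:inducedlabelling} and Theorem~\ref{thm:cocycle}, and your proof simply makes that bookkeeping explicit, using the correspondence between vertex set recolourings and edge cut relabellings together with the observation that admissibility of the edge cut relabelling and legality of the vertex recolouring are the same condition.

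One small remark worth keeping in mind (it applies equally to the paper's terse phrasing): the term ``vertex set recolouring'' here must be read as an \emph{admissible} one, i.e.\ one satisfying the side conditions on $\partial(X)$. A shift by $\alpha$ reduced modulo $p$ can send one proper $(p,q)$-colouring to another while \emph{changing} cycle weights (the paper's own $C_4$ example with $p=4$, $q=1$, $\alpha=2$ shows this), so the forward direction genuinely relies on restricting to admissible recolourings. Your proposal builds this in implicitly through the claimed bijection with edge cut relabellings, which is the right reading; similarly, Proposition~\ref{prop:inducedlabelling} is stated for connected $G$, so the single global constant $k$ requires the standing connectedness assumption made earlier in the paper.
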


To complete our algorithm, we need to determine when recolouring a vertex set $X$ by $\alpha$ can be realized by a sequence of single vertex recolourings.  We begin by proving that for $p/q < 4$, recolouring by $\alpha$ can always be achieved through recolouring $X$ by $1$ ($\alpha$ times). Note this not the case when $p/q \geq 4$.  For example, consider a $(4,1)$-colouring of $C_4$ where the vertices are coloured $0,1,2,3$ (in their cyclic order).  The vertex with colour $0$ can be recoloured to $2$.  However, the recolouring cannot be done by increasing the colour by $1$ (twice).

Given a pair $a,b\in\{0,\dots,p-1\}$, the \emph{interval} $[a,b]$ is defined to be the set $\{a,a+1,\dots, b-1,b\}$ (where addition is modulo $p$). An important property of $G_{p,q}$ used in the proof below is that, when $2\leq p/q<4$, the common neighbours of any two vertices form an interval.  This fact, and its proof, appear in~\cite{BrewsterNoel}. (Roughly, if $x$ and $y$ are common neighbours in two non-contiguous intervals, then the non-neighbours of $x$ and the non-neighbours of $y$ are disjoint intervals.  This requires $p \geq 2+2 \cdot (2q-1)$ or $p/q \geq 4$.)

\begin{prop}\label{prop:oneAtATime}
For $2 \leq p/q < 4$, let $f, g$ be two $(p,q)$-colourings of a graph $G$ where $g$ is obtained from $f$ by recolouring the vertex set $X$ by $\alpha$. Then there is a sequence of colourings $(f = f_0), f_1, f_2, \dots, (f_\alpha = g)$ where each colouring is obtained from its predecessor by recolouring $X$ by $1$ (for the entire sequence) or each is obtained from its predecessor by recolouring $X$ by $-1$ (for the entire sequence).
\end{prop}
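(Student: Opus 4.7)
The plan is to analyze, for each edge $e\in \partial(X)$, what the joint validity of $f$ and $g$ forces on the label $\varphi_f(e)$, and to exploit $p/q<4$ to show that the single shift of $X$ by $\alpha$ can be decomposed into unit shifts of $X$ by $+1$ (or by $-1$). The hypothesis gives $g(v)=f(v)+\alpha \bmod p$ for $v\in X$, and both $f$ and $g$ are $(p,q)$-colourings; hence $\varphi_f(e)$ and $\varphi_g(e)=\varphi_f(e)-\alpha \bmod p$ both lie in $[q,p-q]$ for every edge $e$ of $\partial(X)$.

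For $e=\overrightarrow{uv}\in \partial^+(X)$ with $u\in X$, viewing $\varphi_f(e)$ as an integer in $\{0,\dots,p-1\}$ forces either no wrap-around in the subtraction, placing $\varphi_f(e)\in [q+\alpha,p-q]$ (Case A), or wrap-around, placing $\varphi_f(e)\in [q,\alpha-q]$ (Case B). These intervals are nonempty only when $\alpha\le p-2q$ and $\alpha\ge 2q$, respectively. A dual analysis for $e\in \partial^-(X)$ yields the parallel intervals $[q,p-q-\alpha]$ and $[p+q-\alpha,p-q]$ under the same conditions on $\alpha$. Since $p/q<4$ is equivalent to $p-2q<2q$, these two conditions on $\alpha$ cannot both hold, and if $\alpha$ lies in the open gap $(p-2q,2q)$ then all four intervals are empty, so $\partial(X)=\emptyset$; connectedness of $G$ then forces $X\in\{\emptyset,V(G)\}$ and the conclusion is immediate.

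Outside the gap, a single regime controls all of $\partial(X)$ uniformly. If $\alpha\le p-2q$, define $f_i$ to be the colouring obtained from $f$ by shifting $X$ by $i$, so that $f_\alpha=g$. For $e\in \partial^+(X)$, $\varphi_{f_i}(e)=\varphi_f(e)-i\in [q,p-q]$ for every $0\le i\le \alpha$ because $\varphi_f(e)\ge q+\alpha$; similarly, for $e\in \partial^-(X)$, $\varphi_{f_i}(e)=\varphi_f(e)+i\in [q,p-q]$ since $\varphi_f(e)\le p-q-\alpha$. Thus each $f_i$ is a valid $(p,q)$-colouring, and consecutive terms differ by a $+1$ shift of $X$. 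The case $\alpha\ge 2q$ is handled symmetrically by $-1$ shifts, yielding a sequence of length $p-\alpha$ and using $-(p-\alpha)\equiv \alpha\pmod p$ to reach $g$.

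The main obstacle is the dichotomy step: $p/q<4$ is precisely what prevents a mixed configuration in which some edges of $\partial^+(X)$ lie in Case A while others lie in Case B (and similarly for $\partial^-(X)$). This is the quantitative form of the ``common neighbours in $G_{p,q}$ form an interval'' property invoked just before the statement; once the dichotomy is established, the intermediate-validity check is a routine interval chase.
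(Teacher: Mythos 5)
Your proof is correct and takes essentially the same approach as the paper's: both hinge on the fact that $p-2q<2q$ forces a single shift direction on all of $\partial(X)$, after which validity of the intermediate colourings is a routine interval check. You phrase the argument via the cut-edge labels $\varphi_f(e)$ and the Case A/Case B dichotomy (and handle the empty-cut gap $p-2q<\alpha<2q$ explicitly), while the paper argues directly that $f(u)$, being a common $G_{p,q}$-neighbour of $f(v)$ and $f(v)+\alpha$, must lie in $[f(v)+\alpha,f(v)]$; the two formulations are equivalent and the underlying arithmetic is identical.
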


\begin{proof}
Let $v \in X$ and $vu$ be an edge.  If $u \in X$, then $(f(v)+1)(f(u)+1)$ and $(f(v)-1)(f(u)-1)$ are both edges of $G_{p,q}$.  Consequently we can increment the colour by $1$ of each vertex in $X$, or decrement the colour by $1$ of each vertex in $X$, and maintain a proper $(p,q)$-colouring on $X$.

On the other hand, consider a neighbour $u$ of $v$ such that $u \not\in X$.  Note $f(u)f(v)$ and $f(u)(f(v)+\alpha)$ are both edges of $G_{p,q}$.  If $\alpha=p/2$, then $f(u)$ is distance at least $q$ from both $f(v)$ and $f(v)+p/2$ which implies $p/2 \geq 2q$, contrary to our assumption.  If $\alpha < p/2 < 2q$, then $f(u)$ must belong to the interval $[f(v)+\alpha, f(v)]$. 
%(The interval $[f(v), f(v)+\alpha]$ has length strictly less than $2q$ and $f(u)$ is adjacent to both endpoints.)  
Thus $f(u)$ is adjacent in $G_{p,q}$ to every vertex of $[f(v), f(v)+\alpha]$ and in particular to $f(v)+1$.  Note this argument depends only on $\alpha < p/2$.  Hence the colouring $f_1$ obtained from $f$ by increasing the colour of each vertex in $X$ by $1$ is a $(p,q)$-colouring of $G$.

A similar argument shows for $\alpha > p/2$, the colouring $f_1$ obtained from $f$ by decreasing the colour of each vertex in $X$ by $1$ is a $(p,q)$-colouring.  The result follows.
\end{proof}

%A \emph{star-cut} is an edge cut where $X = \{ x \}$ for some vertex $x \in V$. 
Given $G$, an orientation $\overrightarrow{G}$, and an $(p,q)$-edge-labelling, say $\varphi$, let $D_\varphi$ be the subdigraph of $\overrightarrow{G}$ induced by edges with label $q$ or $p-q$.  Without loss of generality we may assumed $D_\varphi$ is oriented so that all edges have label $q$.  (To simplify notation we shall write $D_f$ instead of $D_{\varphi_f}$ when the edge-labelling $\varphi_f$ comes from some vertex colouring $f$.)  In the case $p/q = 2$, each edge will be oriented both ways, thus we have a directed 2-cycle on each edge.

\begin{lem}\label{lem:starcuts}
Let $f$ be a $(p,q)$-colouring of a graph $G$. Let $f'$ be the $(p,q)$-colouring obtained from $f$ by recolouring some vertex set $X$ by $1$.  Then $f'$ can obtain from $f$ by a sequence of recolourings of single vertices if and only if induced subdigraph $D_f[X]$ contains no directed cycles. 
\end{lem}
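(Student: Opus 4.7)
\emph{Plan.} The crux of both directions is the characterization that a single vertex $v$ admits a valid $+1$ recolouring step precisely when $v$ is a sink of $D_f$: the out-arcs of $v$ in $D_f$ correspond to the edges incident to $v$ whose $\overrightarrow{G}$-label equals $q$ on an out-edge of $v$ or $p-q$ on an in-edge of $v$, and bumping $f(v)$ up by $1$ pushes such a label outside $[q,p-q]$, while any other incident edge has enough slack to absorb the move.

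For \textbf{($\Leftarrow$)}, I would proceed by induction on $|X|$, the base case $|X|=0$ being trivial. If $|X|\geq 1$ and $D_f[X]$ is acyclic, pick a sink $v$ of $D_f[X]$. The first step is to upgrade $v$ to a sink of the full digraph $D_f$: any out-arc $v\to w$ of $D_f$ with $w\notin X$ would, after simultaneously shifting $X$ by $+1$, produce the forbidden label $q-1$ (or $p-q+1$) on the edge $vw$, contradicting the hypothesis that $f'$ is itself a $(p,q)$-colouring. Hence $v$ is a sink of $D_f$, so recolouring $v$ alone by $+1$ yields a valid colouring $f_1$. Because only $v$ changed, $D_{f_1}[X\setminus\{v\}]=D_f[X\setminus\{v\}]$ remains acyclic, and $f'$ is obtained from $f_1$ by recolouring $X\setminus\{v\}$ by $+1$; the induction hypothesis finishes the argument.

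For \textbf{($\Rightarrow$)}, suppose $D_f[X]$ contains a directed cycle $v_1\to v_2\to\cdots\to v_k\to v_1$, which means $f(v_{i+1})\equiv f(v_i)+q\pmod{p}$ for every $i$. The plan is to show that each cycle vertex $v_i$ is \emph{frozen}: the only colour compatible with the constraints imposed by its two cycle-neighbours $v_{i-1}$ and $v_{i+1}$ is $f(v_i)$ itself. Indeed, the set of colours at circular distance $<q$ from either $f(v_i)-q$ or $f(v_i)+q$ consists of two arcs of $2q-1$ points, and a short computation on $\mathbb{Z}_p$ (separating $3q\leq p\leq 4q-1$, where the arcs are disjoint, from $2q\leq p<3q$, where they wrap and merge) shows that for $p\leq 4q-1$ their union covers all of $\mathbb{Z}_p$ except $\{f(v_i)\}$. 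Since the freezing of $v_i$ depends only on the colours at $v_{i-1}$ and $v_{i+1}$, a straightforward induction on the step count of any hypothetical reconfiguration sequence shows that no $v_i$ is ever recoloured, contradicting the required $f'(v_i)=f(v_i)+1$.

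The only genuinely delicate step is the frozen-vertex computation in the reverse direction; everything else is formal bookkeeping built on the sink characterization and the standing assumption $p/q<4$.
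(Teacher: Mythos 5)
Your proposal is correct, and the forward direction ($\Leftarrow$) is essentially the paper's argument: the paper topologically sorts $X$ and increments colours from the last vertex backward, which is exactly your peel-a-sink-and-recurse induction. The observation that any $D_f$-out-arc from $X$ to $\overline{X}$ would contradict the validity of $f'$ is implicit in the paper and you make it explicit; that is a welcome clarification.

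The reverse direction ($\Rightarrow$) is where you diverge. The paper gives a short, local argument tailored to the intended meaning of the lemma (a sequence in which each vertex of $X$ is bumped by $+1$ exactly once): the first cycle-vertex $c_i$ to be bumped still has its $D_f$-out-neighbour $c_{i+1}$ at the old colour, so the edge $c_ic_{i+1}$ momentarily acquires weight $q-1$, a contradiction. This argument is two lines and does not need the hypothesis $p/q<4$. You instead prove the stronger statement that the cycle-vertices are \emph{frozen}---no reconfiguration sequence whatsoever can ever move them---which is exactly the content of Lemma~\ref{lem:fixedVertices} in the paper. That route buys generality (it rules out detours, not just the direct $+1$ strategy) but costs you the interval computation in $\mathbb{Z}_p$, which in turn genuinely requires $p/q<4$ (the standing assumption of the section, so this is fine), and it essentially duplicates work the paper does separately via~\cite{BrewsterNoel}. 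One small slip worth flagging: your case split for when the two forbidden arcs ``are disjoint'' versus ``wrap and merge'' is not quite right---for $f(v_i)=0$ the arcs $\{p-2q+1,\dots,p-1\}$ and $\{1,\dots,2q-1\}$ are disjoint only at $p=4q-1$ and overlap for all smaller $p$ in the range; the relevant check is simply that the complement $\{2q,\dots,p-2q\}$ is empty iff $p<4q$. The conclusion you draw is correct, but the stated boundary $p=3q$ does not mark the disjoint/overlapping transition.
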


\begin{proof}
Suppose $D_f[X]$ contains no directed cycles.  We topologically sort the vertices of $X$ to obtain an ordering $x_1, x_2, \dots, x_t$ where $e = \overrightarrow{x_i x_j} \in E(D_f[X])$ implies $i < j$.  For each $i= t, t-1, \dots, 1$ increase $f(x_i)$ by $1$.  

On the other hand, suppose $D_f[X]$ contains a directed cycle.  If one could sequentially increase the colour of each vertex in $X$ by $1$, then there would be a first vertex $c_i$ in the cycle to have its colour changed.  However, $c_i$ has an out neighbour $c_{i+1}$ (in $D_f[X]$) meaning $f(c_{i+1})-f(c_i) = q$.  The resulting colouring gives the edge $c_i c_{i+1}$ a weight of $q-1$, i.e. the resulting colouring is a not a proper $(p,q)$-colouring.
\end{proof}

\subsection{Polynomial Time Algorithm}

We now describe the polynomial time algorithm for recolouring.  We present a proof of correctness for each step below. The algorithm itself is in Figure~\ref{fig:Algorithm}.

Let $f$ and $g$ be two $(p,q)$-colourings of a graph $G$. As defined in~\cite{3col}, vertices whose colours can never change under any sequence of recolourings are called \emph{fixed}. Following the ideas of~\cite{3col} and using the results above, we identify (the only) three obstructions preventing the recolouring of $f$ to $g$: fixed vertices, cycle weights, and weights of paths between fixed vertices.  

The first step of the algorithm is to identify fixed vertices and verify $f(v)=g(v)$ for all fixed vertices. 

\begin{lem}\label{lem:fixedVertices}
Let $f$ be a $(p,q)$-colouring of a graph $G$ where $2 \leq p/q < 4$.  Suppose $v$ is a vertex in a strongly connected component of $D_f$.  Then the colour of $v$ cannot be changed under any sequence of recolourings.
\end{lem}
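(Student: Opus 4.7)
The plan is to extract a simple directed cycle $C\colon v = v_0, v_1, \dots, v_{k-1}, v_k = v_0$ in $D_f$ through $v$; such a cycle exists because $v$ lies in a strongly connected component of $D_f$. By the definition of $D_f$ (oriented so that every arc has label $q$), each arc $\overrightarrow{v_iv_{i+1}}$ satisfies $f(v_{i+1}) \equiv f(v_i) + q \pmod{p}$. Thus the colours around $C$ form an arithmetic progression modulo $p$ with common difference $q$, and in particular $f(v_{i-1}) \equiv f(v_i) - q$ and $f(v_{i+1}) \equiv f(v_i) + q \pmod{p}$ for each $i$.

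The main step is to prove, by induction on the length of the reconfiguration sequence, that every colouring $h$ reachable from $f$ agrees with $f$ on all of $V(C)$. This is enough because $v_0 = v$. The base case is trivial. For the inductive step, suppose $h$ agrees with $f$ on $V(C)$ and $h'$ is obtained from $h$ by recolouring a single vertex $w$. If $w \notin V(C)$ there is nothing to check, so assume $w = v_i$ and let $c' = h'(v_i)$. By the inductive hypothesis $h'(v_{i-1}) = f(v_i) - q$ and $h'(v_{i+1}) = f(v_i) + q$ (mod $p$); since $h'$ is a valid $(p,q)$-colouring, $c'$ must be a common neighbour of $f(v_i) - q$ and $f(v_i) + q$ in $G_{p,q}$.

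The crux of the argument, and the only place where the hypothesis $p/q < 4$ is used, is the calculation that these two vertices have exactly one common neighbour in $G_{p,q}$. Normalizing $f(v_i) = 0$, a direct check shows that the neighbours of $q$ in $G_{p,q}$ are $\{0\}\cup\{2q, 2q+1, \dots, p-1\}$ while the neighbours of $p-q$ are $\{0, 1, \dots, p-2q\}$; their intersection is $\{0\} \cup \{2q, 2q+1, \dots, p-2q\}$, which collapses to $\{0\}$ precisely when $2q > p-2q$, i.e.\ when $p/q < 4$. (This is exactly the ``common neighbours form an interval'' property referenced in the paper.) Consequently $c' = f(v_i) = h(v_i)$, so in fact $v_i$'s colour does not change, completing the induction and hence the lemma. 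I expect this interval computation to be the one genuine technical obstacle; the rest of the proof is just bookkeeping around the arithmetic progression imposed by $D_f$.
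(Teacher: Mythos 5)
Your proof is correct and follows essentially the same approach as the paper: locate a directed cycle in $D_f$ through $v$, observe that the predecessor and successor of each cycle vertex force its colour because $f(v_i)-q$ and $f(v_i)+q$ have a unique common neighbour in $G_{p,q}$ when $p/q<4$, and then conclude that no vertex of the cycle can ever move. The only (minor) differences are cosmetic: the paper invokes the interval property from Brewster--Noel and leaves the singleton computation implicit, whereas you carry out the computation of the two neighbourhoods and their intersection directly, and you phrase the circularity argument as an explicit induction on the reconfiguration sequence where the paper states it informally.
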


\begin{proof}
Since $v$ belongs to a strongly connected component of $D_f$, $v$ must belong to a directed cycle in $D_f$.  By definition of $D_f$, the predecessor and successor of $v$ on this cycle receive colours $f(v)-q$ and $f(v)+q$ respectively.  As noted above, and proved in~\cite{BrewsterNoel}, the common neighbours of two vertices in $G_{p,q}$ form an interval.  As the colour of $v$ must be a common neighbour of $f(v)-q$ and $f(v)+q$, the only choice for $v$ is $f(v)$. That is, the colour of $v$ cannot change until the colour of one its neighbours changes.  However, this is true for every vertex on the directed cycle.  All the vertices in the strongly connected component are fixed.
\end{proof}

%Secondly by Lemma~\ref{lem:starcuts}, the relabelling of an edge cut can be realized by recolouring vertices one at a time provided the vertices do not belong to a directed cycle in $D_f$. Vertices in a directed cycle can never have their colour changed under any recolouring and are called \emph{fixed} in~\cite{3col}.  Clearly if $f$ recolours to $g$, then the set of fixed vertices under each colouring must be equal and $f(v) = g(v)$ for each fixed vertex $v$.  A vertex belongs to a directed cycle in $D_f$ if and only if it belongs to a non-trivial strongly connected component of $D_f$.  

The strongly connected components of $D_f$ can be found in linear time; hence, we can find the fixed vertices and verify equality of $f$ and $g$ on the fixed vertices in linear time.  (See, e.g., \cite{AhoHU83}.)

In Step 2 (a) of the algorithm we construct a spanning tree $T$ rooted a vertex $u$, and construct the cut $\partial(X)$ as in the proof of Theorem~\ref{thm:cocycle}. Corollary~\ref{cor:fundamentalCycle} shows if some cycle has different weight under $\varphi_f$ and $\varphi_g$, then we will discover a fundamental cycle in $T+e$ for some edge $e \in \partial(X)$.

In Step 2 (b) we calculate the shift $\alpha$ that will increase the number of edges upon which $\varphi_f$ and $\varphi_g$ agree. Proposition~\ref{prop:oneAtATime}, Lemma~\ref{lem:starcuts}, and Lemma~\ref{lem:fixedPath} show that the vertices of $X$ or $\overline{X}$ can be recoloured $1$ at a time, or there is a path between fixed vertices certifying that $f$ does not recolour to $g$. Specifically, in the case we wish to increase the vertex set $X$ by $1$ and $D_f[X]$ contains a directed cycle, we can achieve the same change to the edge labelling on $\partial(X)$ by decreasing $\overline{X}$ by $1$ provided $D_f[\overline{X}]$ does not contain a directed cycle. In the event both subgraphs contain a directed cycle, we have an obstruction to recolouring as described in the following lemma.

%\begin{figure}
%\label{fig:fixed}
%\vspace{-0.5em}
%\begin{center}
%\begin{tikzpicture}
%
%\foreach \i in {0,...,9}
%{
%  \node[blackvertex] (v\i) at (90-\i*36:1.5cm) {};
%  \draw (90-\i*36:1.9cm) let \n1={int(mod(\i*2,5))} in  node  {${\n1}$};
%}
%
%\foreach \i in {0,...,9}
%{
%  \draw[blue,thick] let \n1={int(mod(\i+1,10))} in  (v\i)--(v\n1);
%}
%
%\begin{scope}[xshift=9cm]
%\foreach \i in {0,...,4}
%{
%  \node[blackvertex] (u\i) at (-54-\i*72:1.5cm) {};
%  \draw (-54-\i*72:1.9cm) let \n1={int(mod(\i*2,5))} in  node  {${\n1}$};
%}
%
%\foreach \i in {0,...,4}
%{
%  \draw[blue,thick] let \n1={int(mod(\i+1,5))} in  (u\i)--(u\n1);
%}
%\end{scope}
%
%\node[blackvertex,label=3] (w0) at (2.6,0.1) {};
%\node[blackvertex,label=0] (w1) at (4.4,0.2) {};
%\node[blackvertex,label=2] (w2) at (6.2,0.3) {};
%\draw[blue,thick] (v3)--(w0)--(w1)--(w2)--(u2);
%
%\end{tikzpicture}
%\end{center}
%\vspace{-0.5em}
%\caption{Fixed cycles under a $(5,2)$-colouring $f$.  The vertices on the path are also fixed as any recolouring from $f$ cannot change the weight of the path and the given colouring is the only colouring that achieves the current (minimum) weight.}  
%\end{figure} 

\begin{lem}\label{lem:fixedPath}
Let $G$ be a graph with two $(p,q)$-colourings $f$ and $g$.  Suppose $T$ is a spanning tree rooted at $u$. Let $X = \{ v : \wt(v, \varphi_f) \leq \wt(v, \varphi_g) \}$ (as defined in Theorem~\ref{thm:cocycle}).  If $D_f[X]$ and $D_f[\overline{X}]$ both contain a directed cycle, then $f$ does not recolour to $g$.
\end{lem}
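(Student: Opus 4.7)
The plan is to argue by contradiction: assuming $f$ reconfigures to $g$, I will show that the integer $\varphi$-weight of a carefully chosen tree path between two fixed vertices must simultaneously be invariant under the reconfiguration and strictly larger under $\varphi_f$ than under $\varphi_g$. Specifically, choose a vertex $x$ on a directed cycle in $D_f[X]$ and a vertex $y$ on a directed cycle in $D_f[\overline X]$. By Lemma~\ref{lem:fixedVertices}, both $x$ and $y$ are fixed under every reconfiguration starting at $f$, so every colouring occurring in any reconfiguration sequence (including $g$) must agree with $f$ on $x$ and on $y$.

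Let $P = P_{xy}$ be the unique $x$-$y$ path in $T$ and let $z$ be the least common ancestor of $x$ and $y$ in $T$. A short telescoping calculation (split $P$ at $z$ and use the fact that reversing the traversal direction of a single edge shifts its contribution by $p$) yields, for every $(p,q)$-edge-labelling $\varphi$,
$$\varphi(P) \;=\; |E(P_{xz})| \cdot p \;+\; \wt(y,\varphi) - \wt(x,\varphi),$$
where the left-hand side denotes the weight of $P$ traversed from $x$ to $y$. Combining this with $\wt(x,\varphi_f) \leq \wt(x,\varphi_g)$ (since $x \in X$) and $\wt(y,\varphi_f) > \wt(y,\varphi_g)$ (since $y \in \overline X$) gives the strict inequality $\varphi_f(P) > \varphi_g(P)$.

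The crux of the proof is to show that the integer $\varphi_h(P)$ is invariant under every valid single-vertex recolouring of a $(p,q)$-colouring $h$ that does not change the colour of $x$ or $y$. If the recoloured vertex $v$ is not on $P$, this is immediate. Otherwise $v$ is internal to $P$; letting $a$ and $b$ be its two path-neighbours, the combined contribution of the two edges of $P$ incident to $v$ is a sum of two values in $[q,p-q]$, hence lies in $[2q, 2p-2q]$, and is congruent to $h(b) - h(a) \pmod{p}$ (one checks this uniformly for each of the four possible orientations of these two edges in $\overrightarrow G$). The hypothesis $p/q < 4$ gives $2p - 4q < p$, so $[2q,2p-2q]$ contains at most one integer in each residue class modulo $p$; thus the combined contribution is determined entirely by $h(a)$ and $h(b)$, which are unchanged in a single-vertex recolouring of $v$. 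This interval-length argument is the main technical step and is exactly where the hypothesis $p/q < 4$ is used.

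Applying the invariance along any reconfiguration from $f$ to $g$ (throughout which $x$ and $y$ remain fixed) yields $\varphi_f(P) = \varphi_g(P)$, contradicting the strict inequality $\varphi_f(P) > \varphi_g(P)$ from the telescoping identity. This contradiction shows that $f$ does not reconfigure to $g$.
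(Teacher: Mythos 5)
Your proof is correct and follows the same strategy as the paper's: take fixed vertices $x,y$ on the directed cycles in $D_f[X]$ and $D_f[\overline X]$, show the tree path between them has strictly greater $\varphi_f$-weight than $\varphi_g$-weight, and show this integer path weight is invariant under any single-vertex recolouring of an internal vertex. In fact your treatment of the invariance step is tighter than the paper's, which asserts ``one increases by $\alpha$, the other decreases by $\alpha$'' without addressing wraparound modulo $p$; your observation that the combined contribution lies in $[2q,2p-2q]$, an interval of length $2p-4q<p$ precisely when $p/q<4$, and is determined modulo $p$ by the unchanged colours of the two path-neighbours, supplies exactly the missing justification and makes explicit where the hypothesis $p/q<4$ (implicit in the lemma via the reliance on Lemma~\ref{lem:fixedVertices}) is used.
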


\begin{proof}
Let $x$ belong to a directed cycle in $X$ and $y$ belong to a directed cycle in $\overline{X}$.  Then there are paths in $T$ from $u$ to $x$, say $P_1$, and from $u$ to $y$, say $P_2$ such that $\varphi_f(P_1) \leq \varphi_g(P_1)$ and $\varphi_f(P_2) > \varphi_g(P_2)$.  Thus the reverse of $P_1$ concatenated with $P_2$ is an $x,y$-path such that $\varphi_f(P_1^RP_2) > \varphi_g(P_1^RP_2)$.  Since the end points of this path belong to directed cycles in $D_f$, their colours are fixed.  Let $z$ be an internal vertex of the path.  If $z$ can be recoloured by $\alpha$ it is easy to check the sum of the weight of the two path edges incident with $z$ does not change.  (For example, if both edges are directed in the direction of traversal of the path, one increases by $\alpha$ and the other decreases by $\alpha$.  The analyses for the other possible orientations of the two edges are similar.)
\end{proof}

%Finally, observe in the case we wish to increase the vertex set $X$ by $1$ and $D_f[X]$ contains a directed cycle, we can achieve the same change to the edge labelling on $\partial(X)$ by decreasing $\overline{X}$ by $1$ provided $D_f[\overline{X}]$ does not contain a directed cycle. In the event both subgraphs contain a directed cycle, we have vertices $v \in X$ and $u \in \overline{X}$ which are fixed and a $(u,v)$-path $P$ which must have different weights under $\varphi_f$ and $\varphi_g$.  Recolouring an internal vertex of this path will not change the weight, and thus $f$ cannot recolour to $g$.  Our final necessary condition for $f$ to recolour to $g$ is that all paths $P$ whose end points are fixed must have $\varphi_f(P) = \varphi_g(P)$.  

In~\cite{3col}, fixed vertices are found by successively deleting sources and sinks from $D_f$.  Thus vertices belonging to a directed cycle will be fixed, but also vertices on a directed path between two directed cycles are also fixed.  In our work, we identify fixed vertices of the former type using strongly connected components of $D_f$ and vertices of the latter type by the weight of paths between strongly connected components (our third obstruction) as directed paths in $D_f$ have the smallest possible weight over all edge labellings.  %See Figure~\ref{fig:fixed}.

In Step 3 of the algorithm, we have recoloured vertices to obtain a colouring $f_n$ such that $\varphi_{f_n} = \varphi_g$.  By Corollary~\ref{cor:cyclewts}, $f_n(v) = g(v)+k$ for some constant $k$.  If $D_f$ contains any directed cycles, then there are fixed vertices under $f$.  In Step 1 we have checked that $f(v) = g(v)$ for fixed vertices, so we can conclude $k=0$ and $f_n = g$.  On the other hand, if $k \neq 0$, then there are no directed cycles in $D_f$.  Applying Proposition~\ref{prop:oneAtATime} with $X = V(G)$ shows we can recolour the vertices one at a time to reconfigure $f_n$ to $g$.

\begin{figure}\label{fig:Algorithm}

\centerline{\textbf{The Recolouring Algorithm}}

Let $2 \leq p/q < 4$.

\begin{description}
  \item[Input:] A graph $G$ and two $(p,q)$-colourings $f, g$.
  \item[Output:] A recolouring sequence from $f$ to $g$ or an obstruction to recolouring.
\end{description}

\begin{enumerate}
  \item Find the strongly connected components of $D_f$.  For each vertex $v$ belonging to a nontrivial strongly connected component, verify $f(v) = g(v)$.  If for some such $v$, $f(v) \neq g(v)$, then answer NO, return a directed cycle to which $v$ belongs, and STOP.
  \item Construct a spanning tree $T$ rooted at a vertex $u$. Partition $V(G)$ into three sets: $X_\ell, X_e, X_s$ consisting of those vertices $v$ whose $(u,v)$-path in $T$ has weight larger, equal, smaller under $\varphi_f$ versus $\varphi_g$ respectively.  Repeat until $X_\ell \cup X_s = \emptyset$:
  \begin{enumerate}
    \item If $X_\ell \neq \emptyset$, then let $X = X_e \cup X_s$ and $\overline{X} = X_\ell$. (If $X_\ell = \emptyset$ and
    $X_s \neq \emptyset$ apply an analogous process reversing their roles.) For each $e \in \partial^+(X)$ (resp. $\partial^-(X)$), verify $\varphi_f(e) > \varphi_g(e)$ (resp. $\varphi_f(e) < \varphi_g(e)$).  If some edge fails this test, then answer NO, return a cycle with different weights under $\varphi_f$ and $\varphi_g$, and STOP.
    \item If $D_f[X]$ and $D_f[\overline{X}]$ both contain nontrivial strongly connected components, then answer NO, return a path $P$ between two fixed vertices with $\varphi_f(P) \neq \varphi_g(P)$, and STOP.
    \item Let $\alpha = \min_{e \in \partial(X)} |\varphi_f(e) - \varphi_g(e)|$.  Recolour the vertices of $X$ (or $\overline{X}$) by $1$ using a sequence of single vertex recolourings. Repeat this recolouring by $X$ (by $1$) $\alpha$ times.
    \item Update the path weights in $T$ and the sets $X_\ell, X_e, X_s$.
  \end{enumerate}
  \item At this point $\varphi_f = \varphi_g$.  If $f(u) = g(u)$, then answer YES, return the sequence of recolourings, and STOP.  Otherwise, we know $G$ contains no directed cycles and we can topologically sort $V(G): v_1, v_2, \dots v_n$.  Increase (or decrease) the colour of all the vertices by $1$ in the order $v_n$ to $v_1$.  Repeat this until $f(u) = g(u)$.  Answer YES, return the recolouring sequence, and STOP.
\end{enumerate}

\caption{The Recolouring Algorithm}
\end{figure}

The following theorem is immediate from the algorithm.

\begin{thm}\label{thm:char}
Let $2 \leq p/q < 4$.  Suppose $f$ and $g$ are $(p,q)$-colourings of a graph $G$, $\overrightarrow{G}$ is an orientation of $G$ and $\varphi_f, \varphi_g$ are the edge labellings of $G$ obtained from $f$ and $g$.  Then $f$ recolours to $g$ if and only if: 
\begin{enumerate}
  \item the same set of vertices are fixed under $f$ and $g$ and $f(v) = g(v)$ for all fixed vertices;
  \item for each cycle $C$ of $G$, $\varphi_f(C) = \varphi_g(C)$; and
  \item for each path $P$ whose end points are fixed, $\varphi_f(P)=\varphi_g(P)$.
\end{enumerate}
Moreover, one can find a recolouring sequence from $f$ to $g$ or an obstruction of the above type in polynomial time.
\end{thm}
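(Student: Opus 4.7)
The plan is to prove the equivalence by separately verifying necessity of each of the three conditions and then establishing sufficiency together with the polynomial runtime by invoking the correctness of the algorithm in Figure~\ref{fig:Algorithm}.

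For necessity, I would start by noting that condition (1) is essentially the definition of a fixed vertex, so if $f$ reconfigures to $g$ then fixed vertices must agree. Condition (2) is immediate from Theorem~\ref{thm:cocycle}: any single-vertex recolouring of $v$ by $\alpha$ is the edge cut relabelling on $\partial(\{v\})$ by $\alpha$, and edge cut relabellings preserve the weight of every cycle. For condition (3), I would argue that if a single internal vertex $z$ of a path $P$ is recoloured, the two incident path edges have their $\varphi$-values shifted by equal and opposite amounts (after accounting for the forward/backward orientations relative to the traversal direction of $P$), so $\varphi(P)$ is unchanged; since recolourings can only occur at internal vertices of $P$ (the endpoints are fixed), $\varphi_f(P) = \varphi_g(P)$ throughout any reconfiguration sequence. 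This is precisely the calculation used in the proof of Lemma~\ref{lem:fixedPath}.

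For sufficiency and the algorithmic claim, I would show that when conditions (1)--(3) all hold, the algorithm in Figure~\ref{fig:Algorithm} successfully produces a reconfiguration sequence, and when any one of them fails, the algorithm halts with an explicit obstruction of the corresponding type. Step 1 directly detects violations of (1) via strongly connected components of $D_f$ (using Lemma~\ref{lem:fixedVertices}). In Step 2(a), any edge $e \in \partial^+(X)$ with $\varphi_f(e) \leq \varphi_g(e)$ (or similarly for $\partial^-(X)$) yields, via Corollary~\ref{cor:fundamentalCycle} applied to the fundamental cycle in $T+e$, a cycle witnessing a violation of (2). In Step 2(b), if both $D_f[X]$ and $D_f[\overline{X}]$ contain directed cycles, Lemma~\ref{lem:fixedPath} produces a path between fixed vertices witnessing a violation of (3). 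Otherwise, by Lemma~\ref{lem:starcuts} combined with Proposition~\ref{prop:oneAtATime}, the edge cut relabelling of Step 2(c) can be realized by single-vertex recolourings.

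The main obstacle is showing the loop in Step 2 terminates in polynomial time and that Step 3 correctly handles the residual global shift. For termination, I would argue that each pass through Step 2(c) strictly increases the number of edges on which $\varphi_f$ and $\varphi_g$ agree (by the choice of $\alpha$), giving at most $|E(G)|$ outer iterations; each iteration costs polynomial time for the tree, path weights, SCC computations, and the at most $\alpha \leq p$ single-vertex recolouring passes over $X$. For Step 3, once $\varphi_f = \varphi_g$, Proposition~\ref{prop:inducedlabelling} gives $f_n(v) = g(v) + k$ for a uniform constant $k$; if $D_f$ contains a directed cycle, Lemma~\ref{lem:fixedVertices} together with Step 1 forces $k=0$, and otherwise Proposition~\ref{prop:oneAtATime} with $X = V(G)$ allows shifting all vertices by $1$ repeatedly until $k = 0$, again in polynomial time. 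Combining these steps completes both the characterization and the polynomial-time guarantee.
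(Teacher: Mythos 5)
Your proof is correct and mirrors the paper's intended argument: the paper simply asserts the theorem is ``immediate from the algorithm,'' and you have correctly unpacked that by citing Lemma~\ref{lem:fixedVertices}, Corollary~\ref{cor:fundamentalCycle}, Lemma~\ref{lem:fixedPath}, Lemma~\ref{lem:starcuts}, Proposition~\ref{prop:oneAtATime}, and Proposition~\ref{prop:inducedlabelling} in the same roles the paper uses them. The only minor divergence is in the termination argument: you count edges on which $\varphi_f$ and $\varphi_g$ newly agree, giving $O(|E(G)|)$ outer iterations, while the paper counts vertices joining $X_e$, giving the tighter $O(|V(G)|)$ and hence the $O(|V(G)|^2)$ bound stated in the subsequent corollary; both arguments establish the polynomial-time claim required here.
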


At each iteration of the algorithm we recolour $O(|V(G)|)$ vertices and increase $X_e$ by at least one vertex.  Once a vertex is in $X_e$ it never leaves.  Thus we do $O(|V(G)|)$ recolouring steps.
\begin{cor}
Let $2 \leq p/q < 4$.  Suppose $f$ and $g$ are $(p,q)$-colourings of a graph $G$.  If $f$ recolours to $g$, then there is a reconfiguration sequence of length $O(|V(G)|^2)$ which certifies this.
\end{cor}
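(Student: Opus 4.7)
The plan is to read off the length of the reconfiguration sequence directly from the algorithm in Figure~\ref{fig:Algorithm}. By Theorem~\ref{thm:char}, whenever $f$ recolours to $g$ the algorithm successfully outputs a reconfiguration sequence, so it suffices to bound the total number of single-vertex recolourings it performs. The bound will be obtained as (iterations of the main loop) $\times$ (cost per iteration), plus the cost of Step~3.

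First I would bound the number of iterations of Step~2. Step~2(c) chooses $\alpha = \min_{e \in \partial(X)} |\varphi_f(e) - \varphi_g(e)|$, which is defined so that after shifting, at least one edge of $\partial(X)$ gets equal weight under the two labellings. A short telescoping argument along the $T$-paths from $u$ — using the fact that $u \in X_e$ and that only edges of $\partial(X)$ have their label shifted — shows that at least one vertex of $X_\ell \cup X_s$ migrates into $X_e$ when the sets are updated in Step~2(d). Since $X_e$ is monotone nondecreasing and $|X_e| \leq |V(G)|$, the loop runs at most $|V(G)|$ times.

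Next I would bound the cost of a single iteration. Step~2(c) performs $\alpha$ shifts of $X$ (or $\overline{X}$) by $1$, and each such shift is realized, via Proposition~\ref{prop:oneAtATime} and Lemma~\ref{lem:starcuts}, by a topological sort of $D_f[X]$ followed by at most $|V(G)|$ single-vertex increments. Because $p$ and $q$ are fixed, $\alpha$ is bounded by a constant (indeed by $\lfloor p/2 \rfloor$), so an iteration contributes $O(|V(G)|)$ single-vertex recolourings. Combining with the iteration bound gives $O(|V(G)|^2)$ recolourings for Step~2. Step~3 performs at most $p-1 = O(1)$ global shifts by $1$, each costing $|V(G)|$ single-vertex recolourings by another topological sort on $D_f$, and so contributes only $O(|V(G)|)$ further recolourings. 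Adding gives the claimed $O(|V(G)|^2)$ bound.

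The only delicate point is the telescoping argument that $|X_e|$ strictly grows in every iteration of Step~2; once that is in place, everything else is direct counting, with the constancy of $\alpha$ (which relies on $p,q$ being fixed) doing the rest of the work.
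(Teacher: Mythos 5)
Your proposal follows the same route as the paper's one-paragraph argument: bound the number of iterations of the main loop by $O(|V(G)|)$ via the growth of $X_e$, bound the per-iteration cost by $O(|V(G)|)$ single-vertex recolourings using that $\alpha \leq p-2q$ is a constant because $p,q$ are fixed, and account separately for Step~3. Making the constant-$\alpha$ dependence explicit and covering Step~3 are small refinements over the paper's sketch.

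One caveat applies to both your write-up and the paper's: the claim that a vertex of $X_\ell \cup X_s$ migrates into $X_e$ on \emph{every} iteration is not literally correct. Write $d(v) = \wt(v,\varphi_f) - \wt(v,\varphi_g)$; the relabelling leaves $d$ unchanged on $X$ and shifts it by $\alpha$ on $\overline{X}$, and since $\alpha = \min_{e\in\partial(X)}|\varphi_f(e)-\varphi_g(e)|$ equals $d(y)-d(x)$ for a witnessing edge $xy$ (via the fundamental-cycle identity), the shifted $d(y)$ lands at $d(x)$, which may be strictly negative. So a vertex can pass from $X_\ell$ straight into $X_s$ (and later bounce back) without $X_e$ growing; a small path example with $d$-values $0,1,-2,3,-4$ exhibits exactly this. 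The conclusion is still correct: one robust replacement for your ``telescoping argument'' is the potential $\Phi = \sum_{e\in E(T)}|\varphi_f(e)-\varphi_g(e)|$, which is at most $(p-2q)(|V(G)|-1) = O(|V(G)|)$, never increases, and drops by at least $\alpha \geq 1$ per iteration because $\partial(X)$ always contains a tree edge on which the relabelling moves $\varphi_f$ toward $\varphi_g$ without overshooting. This yields the $O(|V(G)|)$ iteration bound, after which your counting of single-vertex recolourings goes through unchanged.
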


\section{The \complete{{\bf PSPACE}} Cases: \texorpdfstring{$\boldsymbol{p/q\geq 4}$}{p/q >= 4}}
\label{circularPSPACE}

\subsection{Overview}

As in the previous section, all addition and subtraction involving elements of $\{0,\dots,p-1\}$ is viewed modulo $p$. We say that $i,j\in\{0,\dots,p-1\}$ are \emph{compatible} if they are adjacent in $G_{p,q}$.  For notational convenience, let us define $k:=\lfloor p/q\rfloor$ and $r:= p-kq$. 

Consider, for a moment, the case $r=0$. If $q=1$, then the complexity is \complete{PSPACE} by Theorem~\ref{colourings}. Otherwise, let $\gamma:\{0,\dots,k-1\}\to \{0,\dots,p-1\}$ be defined by $\gamma(i):=qi$ and $\phi:\{0,\dots,p-1\}\to \{0,\dots,k-1\}$ be defined by $\phi(j) := \left\lfloor j/q\right\rfloor$. It is not hard to see that $\gamma$ is a homomorphism from $K_k$ to $G_{p,q}$ and that $\phi$ is a homomorphism from $G_{p,q}$ to $K_k$. Also, given an instance $(G,f,g)$ of the $k$-\textsc{Recolouring} problem, we have that $f$ reconfigures to $g$ if and only if $\gamma f$ reconfigures to $\gamma g$ as $(p,q)$-colourings. Therefore, the $(p,q)$-\textsc{Recolouring} problem is \complete{PSPACE} if $p/q\geq 4$ and $r=0$. Thus while the proofs below work when $r=0$, to avoid trivialities we assume that $r \geq 1$. 

Given an instance $(G,f,g)$ of the $k$-\textsc{Recolouring} problem, we will construct an instance $(G',\alpha,\beta)$ of the $(p,q)$-\textsc{Recolouring} problem in polynomial time such that:
\begin{itemize}
\item $|V(G')| = O(|V(G)|+|E(G)|)$ and
\item $f$ reconfigures to $g$ if and only if $\alpha$ reconfigures to $\beta$. 
\end{itemize}
It will follow from Theorem~\ref{colourings} that the reconfiguration problem for $(p,q)$-colourings is \complete{PSPACE}, thereby completing the proof of Theorem~\ref{circularThm}. 

We give a brief outline of the ideas behind the construction before moving into the finer details. The first step is to divide the set $\{0,\dots,p-1\}$ into $k$ intervals which we will, in some sense, treat as $k$ separate colours. Define
\[S_0:= [0,q+r-1],\text{ and}\]
\[S_i := [iq+r,(i+1)q+r-1]\text{ for }1\leq i\leq k-1.\]
It is clear that $S_i$ and $S_j$ are disjoint for $i\neq j$ and that $\bigcup_{i=0}^{k-1}S_i = \{0,\dots,p-1\}$. Let $\gamma:\{0,\dots,k-1\}\to \{0,\dots,p-1\}$ be the function which maps $i$ to the left endpoint of $S_i$ for $0\leq i\leq k-1$. For $i\neq j$, it is easily observed that the left endpoint of $S_i$ is adjacent to the left endpoint of $S_j$ in $G_{p,q}$. Therefore, given any $k$-colouring $f:V(G)\to \{0,\dots,k-1\}$, the composition $\gamma f$ is a $(p,q)$-colouring. 

This observation guides part of our construction. The graph $G'$ will contain $G$ as an induced subgraph and the $(p,q)$-colourings $\alpha$ and $\beta$ will be defined so that their restrictions to $G$ will be equal to $\gamma f$ and $\gamma g$, respectively. Now, if it is possible to reconfigure $\alpha$ to $\beta$ in such a way that none of the intermediate colourings map a pair of adjacent vertices of $G$ to the same set $S_i$, then we immediately obtain a reconfiguration sequence taking $f$ to $g$. That is, to obtain a sequence of $k$-colourings taking $f$ to $g$, one could simply compose each $(p,q)$-colouring of the sequence with the function $\varphi:\{0,\dots,p-1\}\to \{0,\dots,k-1\}$ which maps every vertex of $S_i$ to $i$ for $0\leq i\leq k-1$. Notice that, for $i\neq 0$, the set $S_i$ is an independent set in $G_{p,q}$ and therefore no adjacent pair is ever mapped to $S_i$. Thus, all that we need to worry about is that some of the intermediate colourings may map two adjacent vertices of $G$ to $S_0$. To remedy this, we will add some structures (or ``gadgets'') to $G'$ which will forbid adjacent vertices of $G$ from mapping to $S_0$. 

To this end, we start by adding a copy of $G_{p,q}$ disjoint from $G$ with vertex set $\left\{y_0,\dots,y_{p-1}\right\}$, where $y_iy_j$ is an edge whenever $q\leq |i-j|\leq p-q$. We extend the $(p,q)$-colourings $\alpha$ and $\beta$ to $\{y_0,\dots,y_{p-1}\}$ by setting $\alpha(y_i)=\beta(y_i)=i$ for all $i$. It is clear that each vertex $y_i$ is fixed in $\alpha$ and $\beta$ and, moreover, in any $(p,q)$-colouring which reconfigures to $\alpha$ or $\beta$. 

Now, for each edge $uv$ of $G$, we will add two \emph{forbidding paths} $P_{uv}$ and $P_{vu}$ from $u$ to $v$ to $G'$ which are internally disjoint from $V(G)\cup\{y_0,\dots,y_{p-1}\}$ and one another, as well as from every other such path. These paths restrict the colour pairs which can appear on $u$ and $v$ during a reconfiguration process. We achieve this by assigning to each interval vertex of the path, a list of allowed colours for that vertex, i.e. we use a list colouring.  This is accomplished by joining the internal vertices of the paths to specific subsets of $\{y_0,\dots,y_{p-1}\}$.  For example consider the case $p=18, q=4$ which we explore below. Suppose we want a path vertex $x$ to always be coloured with an element of the list $\{ 4, 5, \dots, 11 \}$. Then it suffices to join $x$ to the vertices $\{y_{15}, y_{16}, y_{17}, y_{0}\}$. As we will show, the colours in the lists forbids $u$ and $v$ from mapping to $S_0$. Also, once we describe our construction in detail, it will be clear that the length of the forbidding paths depends only on $p$ and $q$, and so $|V(G')| = O\left(|V(G)| + |E(G)|\right)$.

The difficulty now comes in proving that if $f$ reconfigures to $g$, then $\alpha$ reconfigures to $\beta$. Specifically, given a reconfiguration sequence $(h_i)_{i=1}^s$ taking $f$ to $g$, we need that the lists assigned to the internal vertices of the forbidding paths are flexible enough that we can use $(h_i)_{i=1}^s$ to obtain a reconfiguration sequence taking $\alpha$ to $\beta$. This will be proved using a moderate amount of case analysis at the end of the section. Before moving on, we remark that the general strategy of using some sort of forbidding paths in which the internal vertices are confined to lists was also used by Bonsma and Cereceda~\cite{Bonsma} (in a somewhat different manner) to prove that the reconfiguration problem for $k$-colourings is \complete{PSPACE} for $k\geq4$. 

\subsection{Defining the Forbidding Paths} 

Let $uv$ be an edge of $G$. We will now describe our construction of the forbidding path $P_{uv} = ux_0^{uv}x_1^{uv}\dots x_t^{uv}v$ from $u$ to $v$, including the definition of the lists assigned to the internal vertices $x_0^{uv},\dots,x_t^{uv}$. As mentioned before, the construction of $G'$ also includes a path $P_{vu}$ from $v$ to $u$ which is defined similarly. 

\begin{figure}[htb]
\vspace{-0.5em}
\begin{center}
\begin{tikzpicture}

\node[smallblack,label={$u$}] (u) at (0,1) {};
\node[smallblack,label={$x_0^{uv}$}] (A) at (1,2) {};
\node[smallblack,label=below:{$x_t^{vu}$}] (B) at (1,0) {};

\node[smallblack,label={$v$}] (v) at (5,1) {};
\node[smallblack,label={$x_t^{uv}$}] (Av) at (4,2) {};
\node[smallblack,label=below:{$x_0^{vu}$}] (Bv) at (4,0) {};

\node at (2.5,2.2) {$\dots$};
\node at (2.5,-0.2) {$\dots$};

\draw[thick,blue] (u)--(A)--(1.5,2.2);
\draw[thick,blue] (u)--(B)--(1.5,-0.2);
\draw[thick,blue] (v)--(Av)--(3.5,2.2);
\draw[thick,blue] (v)--(Bv)--(3.5,-0.2);
\draw[thick,blue] (u)--(v);

\end{tikzpicture}
\end{center}
\vspace{-0.5em}
\caption{Forbidding paths attached to each edge $uv$}
\label{fig:paths}
\end{figure}

First, the construction of the lists depends on the sequence $q, 2q, 3q, \dots, r$. Define $t$ to be the smallest positive integer such that $(t+1)q\equiv r \!\!\pmod{p}$. Now, the \emph{lists} for the internal vertices of $P_{uv}$ are intervals of $G_{p,q}$ and are defined as follows:
\[L\left(x_0^{uv}\right) = L\left(x_t^{uv}\right) :=[p-1,2q-1],\]
\[L\left(x_i^{uv}\right):= [iq,(i+2)q-1]\text{ for }1\leq i\leq t-1.\]
As stated in the outline, in order to enforce these lists we add a copy of $G_{p,q}$ on vertex set $\left\{y_0,\dots,y_{p-1}\right\}$ and join the vertices of $P_{uv}\setminus \{u,v\}$ to the appropriate vertices of this set. Specifically, we join the vertices $x_0^{uv}$ and $x_t^{uv}$ to $\left\{y_j: j\in [3q-1,p-q-1]\right\}$ and join $x_i^{uv}$ to $\left\{y_j: j\in [(i+3)q-1,(i-1)q]\right\}$ for $1\leq i\leq t-1$. 

Figure~\ref{fig:pathlist} contains a useful tabular representation of the lists, and a specific case $(p,q)=(18,4)$ is depicted in Figure~\ref{fig:examplelists}. In both of these figures, the colours are laid out so that below colour $c$ in the table is the colour $c+q$. This gives the following proposition.
\begin{prop}\label{prop:downandright}
When colouring the path $x_0^{uv}, x_1^{uv}, \dots, x_t^{uv}$ the colours used (one per row) must be directly below or to the right of the preceding vertex, and any such sequence of colours from the table gives a good colouring of the path with the exception of $L(x_{t-1}^{uv})$ and $L(x_t^{uv})$.  In this case colours in the interval $[r-2q,r-1]$ of $L(x_{t-1}^{uv})$ are compatible with colours below and to the right in the interval $[p-1,q+r]$ of $L(x_t^{uv})$.  Similarly, the interval $[-q-1,r-1]$ is compatible with colours below and to the right of $[p-1,2q-1]$.
\end{prop}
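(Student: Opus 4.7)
The plan is to verify the proposition by computing adjacency in $G_{p,q}$ between colours from consecutive lists row by row. I would handle the three regimes separately: the generic interior rows, the top endpoint, and the final wrap-around transition.

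For a generic consecutive pair $x_i^{uv}, x_{i+1}^{uv}$ with $1 \leq i \leq t-2$, I would parametrise each colour in $L(x_i^{uv}) = [iq,(i+2)q-1]$ by its offset $a \in \{0, \ldots, 2q-1\}$, writing $c_i = iq + a$ and similarly $c_{i+1} = (i+1)q + b$. Then $c_{i+1} - c_i = q + (b - a) \in [-q+1,\, 3q-1]$, and since $p \geq 4q$ we have $3q - 1 \leq p - q$, so the upper half of the edge condition $q \leq |c_i - c_{i+1}| \leq p - q$ is automatic and the condition collapses to $b \geq a$. Interpreting $a$ and $b$ as column positions, this is exactly the ``directly below or to the right'' rule. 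For the transition out of $x_0^{uv}$, the only colour not already covered is $p - 1$; here I would check $p - 1 - c_1 \in [p - 3q,\, p - q - 1] \subseteq [q, p - q]$ for every $c_1 \in L(x_1^{uv}) = [q, 3q-1]$, and conclude that $p - 1$ is compatible with all of $L(x_1^{uv})$, consistent with placing $p - 1$ in the leftmost column of row $0$.

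The substantive step is the last transition $x_{t-1}^{uv} \to x_t^{uv}$, where $L(x_t^{uv}) = [p - 1, 2q - 1]$ is the wrap-around list rather than the natural continuation $[tq, (t+2)q - 1]$ that would come from the shift-by-$q$ pattern. Using $(t+1)q \equiv r \pmod{p}$, I would first unfold $L(x_{t-1}^{uv})$ explicitly as $[p - 2q + r,\, p - 1] \cup [0, r-1]$ and $L(x_t^{uv})$ as $\{p-1\} \cup [0, 2q-1]$, as concrete subsets of $\{0, \ldots, p-1\}$, and then perform a four-way case analysis based on which piece of $L(x_{t-1}^{uv})$ contains $c_{t-1}$ and which piece of $L(x_t^{uv})$ contains $c_t$. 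In each sub-case the condition $q \leq |c_{t-1} - c_t| \leq p - q$ reduces to a comparison of interval endpoints, and grouping the resulting compatibility pairs by which sub-interval of $L(x_t^{uv})$ they land in yields the two clauses of the exception: the first describes the portion of the compatibility where the ordinary shift-by-$q$ rule survives, restricted to the sub-interval $[p-1, q+r]$, while the second captures the additional flexibility gained from $L(x_t^{uv})$ extending past the natural row by the residue $r$. I anticipate the main obstacle to be the wrap-around bookkeeping in this final step; the individual comparisons of endpoints are routine once the four sub-cases are written out.
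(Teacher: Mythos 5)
Your approach matches the paper's: both proofs verify adjacency directly between consecutive lists, and both use $p \geq 4q$ to dispose of the non-binding side of $q \leq |c_i - c_{i+1}| \leq p-q$ on the interior rows; your offset parametrisation $c_i = iq+a$, $c_{i+1}=(i+1)q+b$ is simply a cleaner packaging of the colour-by-colour walk the paper carries out. Your treatment of $x_0^{uv}$ (only the extra colour $p-1$ needs a separate check) is likewise the same. For the last transition the paper is terser --- it verifies only the extreme pair $c_{t-1}=r-2q$, $c_t=q+r$ and asserts the rest --- while your four-way case analysis is more systematic; both are sound.

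One caveat on your narrative of the last step, which is imprecise in a way that could mislead the case analysis. The below-or-to-the-right rule genuinely \emph{fails} at the last transition outside the ranges stated in the proposition: for $4 \leq p/q < 5$ the pair $c_{t-1}=r-2q$, $c_t=2q-1$ lies below-and-to-the-right in the table, yet $|c_{t-1}-c_t| = p+r-4q+1 < q$ whenever $p < 5q-r-1$, so these are not adjacent in $G_{p,q}$. The proposition restricts clause 1 to $c_t \in [p-1,q+r]$ and clause 2 to $c_{t-1} \in [-q-1,r-1]$ precisely so that $|c_{t-1}-c_t| \geq q$ becomes automatic; your description of clause 2 as ``additional flexibility from the residue $r$'' omits the accompanying restriction on $c_{t-1}$ (and the overhang of $L(x_t^{uv})$ past the natural shift-by-$q$ row is $q-r$ colours, not $r$). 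Carrying out your four-way case analysis with the wrap-around tracked carefully will surface the correct restricted clauses rather than an unqualified rule --- just be prepared for that.
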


\begin{proof}
For $1 \leq i \leq t-2$ vertex $x_i^{uv}$ receives a colour from the interval $[iq, (i+2)q-1]$ and $x_{i+1}^{uv}$ receives a colour from $[(i+1)q, (i+3)q-1]$.  Observe $iq$ is compatible with all colours in the interval $[(i+1)q,(i-1)q]$.  Since $p+(i-1)q \geq (i+3)q-1$,
we have $iq$ is compatible with every colour on the list for $x_{i+1}^{uv}$.  The colour $iq+1$ is compatible with $[(i+1)q+1,(i-1)q+1]$ which include all colours below and to the right of $iq+1$.  Continuing one sees that $(i+2)q-1$ is compatible with $[(i+3)q-1,(i+1)q-1]$ which includes only the last vertex of the list for $x_{i+1}^{uv}$.  Thus the proposition holds for $1 \leq i \leq t-2$.  It is straightforward to verify the case $i=0$ as well.  Consider now $i=t-1$.  The argument is similar; however, we need to consider the two cases in the statement of the proposition.  First for a colour $[r-2q,r-1]$, a colour in $[p-1,q+r]$ is compatible if and only if it is below and to the right.  (Note the right hand end point $q+r$ is compatible with $r-2q$ if and only if $p+r-2q-q \geq q+r$ or $p \geq 4q$.)  A similar statement holds for $[-q-1,r-1]$ and $[p-1,2q-1]$.  The result follows.
\end{proof}

Finally the vertical bar in the table provides the following information: if vertex $u$ receives colour $0$, then only colours to the right of the vertical bar can be used to colour the path $P_{uv}$.

%\begin{figure}[htb]
%$$
%\begin{array}{c|*{4}{c}|*{5}{c}}
%\mbox{Vertex} & \multicolumn{9}{c}{\mbox{Lists}} \\ \hline
%x_0^{uv}      & p-1 &  0   & \dots  &  q-1  &  q  & \dots  & 2q-1   \\
%x_1^{uv}      &     &  q   & \dots  & 2q-1  & 2q  & \dots  & 3q-1   \\
%x_2^{uv}      &     & 2q   & \dots  & 3q-1  & 3q  & \dots  & 4q-1   \\
%\vdots        &     &      & \vdots &       &     & \vdots &        \\
%x_{t-1}^{uv}  &     & -d-q & \dots  & -d-1  & -d  & \dots  & -d+q-1 \\
%x_t^{uv}      &     &      & p-1    & \dots & q-d & \dots  & -d+2q-1  &  \dots & 2q-1 
%\end{array}
%$$
%\caption{Assignment of lists to the internal vertices of the forbidding path.}
%\label{fig:pathlist}
%\end{figure}

\begin{figure}[htb]
{\scriptsize
$$
\begin{array}{c|*{6}{c}|*{5}{c}}
\mbox{Vertex} & \multicolumn{11}{c}{\mbox{Lists}} \\ \hline
x_0^{uv}      & p-1 &  0   & \dots  &  q-r-1 & \dots &  q-1  &  q  & \dots  & 2q-1   \\
x_1^{uv}      &     &  q   & \dots  & 2q-r-1 & \dots & 2q-1  & 2q  & \dots  & 3q-1   \\
x_2^{uv}      &     & 2q   & \dots  & 3q-r-1 & \dots & 3q-1  & 3q  & \dots  & 4q-1   \\
\vdots        &     &      & \vdots &        & \vdots &       &     & \vdots &        \\
x_i^{uv}      &     & iq   & \dots  & (i+1)q-r-1 & \dots & (i+1)q-1  & (i+1)q  & \dots  & (i+2)q-1   \\
\vdots        &     &      & \vdots &        & \vdots &       &     & \vdots &        \\
x_{t-1}^{uv}  &     & r-2q & \dots  & -q-1   & \dots & r-q-1 & r-q & \dots  & r-1 \\
x_t^{uv}      &     &      &        & p-1    & \dots & r-1   & r   & \dots  & q+r-1  &  \dots & 2q-1 
\end{array}
$$
}
\caption{Assignment of lists to the internal vertices of the forbidding path.}
\label{fig:pathlist}
\end{figure}

\begin{figure}[htb]
$$
\begin{array}{c|*{5}{r}|*{6}{r}}
\mbox{Vertex} & \multicolumn{11}{c}{\mbox{Lists}} \\ \hline
x_0^{uv}   & 17 &  0 &  1 &  2 &  3 &  4 &  5 &  6 &  7 \\
x_1^{uv}   &    &  4 &  5 &  6 &  7 &  8 &  9 & 10 & 11 \\
x_2^{uv}   &    &  8 &  9 & 10 & 11 & 12 & 13 & 14 & 15 \\
x_3^{uv}   &    & 12 & 13 & 14 & 15 & 16 & 17 &  0 &  1 \\
x_4^{uv}   &    &    & 17 &  0 &  1 &  2 &  3 &  4 &  5 &  6  & 7
\end{array}
$$
\caption{Assignment of lists to the internal vertices of the forbidding path
for the $(p,q)=(18,4)$ case.}
\label{fig:examplelists}
\end{figure}

Before moving on, let us check that the paths $P_{uv}$ and $P_{vu}$ actually do the job that they are meant to do; namely, that they forbid $u$ and $v$ from both being mapped to $S_0$. 

\begin{prop}\label{prop:forpath}
Let $uv$ be an edge of $G$. If $\psi$ is a $(p,q)$-colouring of $P_{uv}\cup P_{vu}$ such that the internal vertices of each path are coloured from their lists, then $\psi(u)$ and $\psi(v)$ cannot both be contained in $S_0$. 
\end{prop}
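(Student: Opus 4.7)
The plan is to argue by contradiction. Suppose $\psi(u), \psi(v) \in S_0 = [0, q+r-1]$. Since $uv$ is an edge of $G \subseteq G'$, the colouring $\psi$ satisfies $|\psi(u) - \psi(v)| \in [q, p-q]$; combined with $\psi(u), \psi(v) \in [0, q+r-1]$, this forces $|\psi(u) - \psi(v)| \in [q, q+r-1]$. Without loss of generality (by relabelling $u, v$ if necessary, which swaps the roles of $P_{uv}$ and $P_{vu}$), one may assume $\psi(u) \leq \psi(v)$, so $\psi(u) \in [0, r-1]$ and $\psi(v) \in [q, q+r-1]$.

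Next I would parameterize the colours $c_i := \psi(x_i^{uv})$ by ``positions'' $s_i$ in the respective lists, aiming to establish monotonicity $s_0 \leq s_1 \leq \cdots \leq s_t$ and then extract a contradiction from the boundary conditions at $u$ and $v$. For $1 \leq i \leq t-1$, the list $L(x_i^{uv}) = [iq, (i+2)q-1] \pmod{p}$ gives the clean parameterization $c_i = iq + s_i$ with $s_i \in [0, 2q-1]$. Using $(t+1)q \equiv r \pmod{p}$, I would parameterize $c_t \equiv tq + s_t \pmod{p}$ so that $c_t = p-1$ corresponds to $s_t = q-r-1$ and $c_t \in [0, 2q-1]$ corresponds to $s_t \in [q-r, 3q-r-1]$; $c_0$ is parameterized analogously. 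A short case analysis on each interior edge $x_{i-1}^{uv} x_i^{uv}$ gives $c_i - c_{i-1} = q + (s_i - s_{i-1})$, and the edge condition combined with $p \geq 4q$ rules out the wrap-around case (which would demand $s_i - s_{i-1} \leq -2q$), forcing $s_i \geq s_{i-1}$; analogous arguments extend this to the two boundary edges at $x_0^{uv}$ and $x_t^{uv}$.

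The last step is the boundary analysis. At $u$: since $\psi(u) \leq r-1 < q-1$, the option $c_0 = p-1$ would need $p-1-\psi(u) \leq p-q$, i.e., $\psi(u) \geq q-1$, which fails; hence $c_0 \in [0, 2q-1]$, and $|c_0 - \psi(u)| \geq q$ together with $c_0 \geq 0$ forces $c_0 \geq \psi(u) + q$, giving $s_0 \geq \psi(u) + q \geq q$. At $v$: since $\psi(v) \geq q$, the option $c_t \in [\psi(v)+q, 2q-1]$ is empty, leaving $c_t \in \{p-1\} \cup [0, \psi(v)-q]$; either sub-case yields $s_t \leq \psi(v) - r \leq q - 1$. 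Combining with monotonicity gives $q \leq s_0 \leq s_t \leq q - 1$---the desired contradiction. The main obstacle will be the two atypical endpoints $x_0^{uv}, x_t^{uv}$, whose lists $\{p-1\} \cup [0, 2q-1]$ wrap around $p-1$ and do not fit the uniform interior pattern; the right integer parameterization converts this wrap into a monotone range so that the $u$-side and $v$-side bounds can be directly compared along the chain.
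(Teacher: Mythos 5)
Your proof is correct and follows essentially the same approach as the paper's: after the WLOG reduction to $\psi(u)\in[0,r-1]$ and $\psi(v)\in[q,q+r-1]$, you show that the ``list position'' can only move rightward along $P_{uv}$, which is exactly the content of Proposition~\ref{prop:downandright} (the ``below or to the right'' rule) that the paper invokes, and the two boundary constraints ($s_0 \geq q$ at $u$, $s_t \leq q-1$ at $v$) correspond to the paper's observation that $\psi(u)\in[0,r-1]$ forces colouring to the right of the vertical bar while $\psi(v)\in[q,q+r-1]$ forbids $\psi(x_t^{uv})\in[r,2q-1]$. Your explicit parameterization $c_i = iq + s_i$ is a cleaner algebraic formalization of the paper's tabular argument, but the underlying mechanism is identical.
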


\begin{proof}
Suppose to the contrary that $\psi(u),\psi(v)\in S_0$. In $G_{p,q}$, the only edges contained in $S_0$ are between vertices in $[0,r-1]$ and vertices in $[q,q+r-1]$. So, without loss of generality, we can assume that 
\begin{equation}\label{leftright}\psi(u)\in [0,r-1]\text{ and }\psi(v)\in [q,q+r-1].\end{equation}
However, the fact that $\psi(u)\in [0,r-1]$ implies that $\psi\left(x_0^{uv}\right)\in [q,2q-1]$ (Note that $r-1 < q-1$ and thus $(r-1,p-1)$ is not an edge of $G_{p,q}$.)  By our observations above, we see that only colours to the right of the vertical line in Figure~\ref{fig:pathlist} can be used to colour $P_{uv}$.  In particular $\psi\left(x_t^{uv}\right) \in [r,2q-1]$. This implies that $\psi(v) \not\in [q, q+r-1]$, which contradicts (\ref{leftright}) and completes the proof.
\end{proof}

\subsection{Defining \texorpdfstring{$\boldsymbol{\alpha}$}{alpha} and \texorpdfstring{$\boldsymbol{\beta}$}{beta} on the Forbidding Paths}

As we have already mentioned, $\alpha$ and $\beta$ are defined in such a way that their restrictions to $G$ are equal to $\gamma f$ and $\gamma g$, respectively, and $\alpha(y_i)=\beta(y_i)=i$ for all $i$. Next, we will describe the way in which we extend $\alpha$ and $\beta$ to the vertices of the forbidding paths. 

Recall that the vertices of $G$ each receive a colour
from $\{ 0, q+r, 2q+r, \dots, (k-1)q+r \}$ under $\alpha$.
Given an edge $uv$ of $G$, the internal vertices of $P_{uv}$ are coloured as follows.
If $\alpha(u) \neq 0$ and $\alpha(v) \neq 0$, then colour $x_i^{uv}$ with colour
$iq$ for $i=0, 1, 2, \dots, t-1$ and colour $x_t^{uv}$ with $0$. (These colours correspond to the left hand column in Figure~\ref{fig:pathlist} for $1 \leq i \leq t-1$.) 
The path $P^{vu}$ is similarly coloured
so that $\alpha(x_{i}^{vu}) = iq$ and $\alpha(x_t^{vu}) = 0$.  In particular, 
$\alpha(x_0^{uv})=\alpha(x_t^{vu})=0$ for all $v \in N(u)$.  
Note that $(t-1)q \equiv r-2q \pmod{p}$ which is compatible with $0$.  
On the other hand, if $\alpha(u) = 0$, then set $\alpha(x_i^{uv}) = (i+1)q$ for $i=0, 1, \dots, t$. 
For $P_{vu}$, set $\alpha(x_i^{vu})=iq$ for $i=0, 1, \dots, t-1$ and $\alpha(x_t^{vu})=q$.
Observe in this case, the internal vertices around $u$ have colour $q$, 
i.e. $\alpha(x_0^{uv})=\alpha(x_t^{vu})=q$ for all $v \in N(u)$,
and the internal vertices around $v$ have colour $r$ or $0$, 
i.e. $\alpha(x_t^{uv})=r$ and $\alpha(x_0^{vu})=0$ for all $v \in N(u)$.
 
We similarly extend $\beta$ using $g$.
A technical detail we will exploit below is that in extending $\alpha$ and $\beta$ the colour for $x_t^{uv}$ belongs to $\{0, r, q \}$.  These colours are compatible with all colours above and to the left in the row for $x_{t-1}^{uv}$ by Proposition~\ref{prop:downandright}.
We call these colourings of $P_{uv}$ the \emph{standard path colourings}.  At the start of each
reconfiguration step we assume the paths have a standard path colouring, and the end of each 
reconfiguration step we ensure the paths have a standard path colouring.  The standard path colourings
correspond to coloumns in Figure~\ref{fig:pathlist} with possible changes at $x_t^{uv}$.

By Proposition~\ref{prop:forpath}, given any sequence of $(p,q)$-colourings which reconfigures $\alpha$ to $\beta$, we can compose each of these colourings with $\gamma$ to obtain a sequence of $k$-colourings taking $f$ to $g$. This proves the following proposition.

\begin{prop}\label{prop:toG}
Suppose $(G',\alpha,\beta)$ is an instance of $(p,q)$-\textsc{Recolouring} obtained from
$(G,f,g)$, an instance of $k$-\textsc{Recolouring}, as described above.  If $\alpha$
reconfigures to $\beta$, then $f$ reconfigures to $g$.
\end{prop}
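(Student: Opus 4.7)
The plan is to lift a reconfiguration sequence $\alpha=h_0,h_1,\dots,h_m=\beta$ of $(p,q)$-colourings of $G'$ to a reconfiguration sequence of $k$-colourings of $G$ by restricting each $h_i$ to $V(G)$ and then composing with the ``quotient'' map $\varphi:\{0,\dots,p-1\}\to\{0,\dots,k-1\}$ which sends every element of $S_i$ to $i$ (as introduced in the outline). Concretely, I would define $h'_i:=\varphi\circ (h_i|_{V(G)})$ for each $i$, then argue that $(h'_i)_{i=0}^m$, after deleting consecutive duplicates, is a reconfiguration sequence from $f$ to $g$.

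The core verification is that each $h'_i$ is a proper $k$-colouring of $G$. Let $uv\in E(G)$ and set $a=h_i(u)$, $b=h_i(v)$; since $h_i$ is a $(p,q)$-colouring of $G'$ and $uv\in E(G')$, we have $ab\in E(G_{p,q})$. I need $\varphi(a)\neq\varphi(b)$, i.e. $a$ and $b$ must lie in different $S_j$'s. For $j\geq 1$ the interval $S_j$ has exactly $q$ consecutive elements, so any two distinct members are at distance less than $q$ in the cyclic order and therefore non-adjacent in $G_{p,q}$; hence $a$ and $b$ cannot both lie in such an $S_j$. The only remaining possibility is $a,b\in S_0$, and this is ruled out by applying Proposition~\ref{prop:forpath} to the restriction of $h_i$ to the subgraph $P_{uv}\cup P_{vu}$ (whose internal vertices are confined to their prescribed lists precisely because each such vertex is adjacent in $G'$ to the relevant copy of $G_{p,q}$, and the $y_j$'s are fixed throughout the sequence).

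It remains to check the endpoints and the single-vertex-change condition. For the endpoints, $\alpha|_{V(G)}=\gamma f$, where $\gamma(i)$ is the left endpoint of $S_i$, so $\varphi\circ\gamma$ is the identity on $\{0,\dots,k-1\}$ and hence $h'_0=\varphi\circ\gamma\circ f=f$; symmetrically $h'_m=g$. Between consecutive terms, $h_i$ and $h_{i+1}$ agree off a single vertex $w$; if $w\notin V(G)$ then $h'_i=h'_{i+1}$, while if $w\in V(G)$ then $h'_i$ and $h'_{i+1}$ agree off $w$ (and may or may not disagree at $w$). In either case, after removing consecutive duplicates we obtain a sequence of proper $k$-colourings in which each successive pair differs on at most one vertex, starting at $f$ and ending at $g$. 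This is a valid reconfiguration sequence, completing the proof.

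The only subtle point, and thus the main thing to be careful about, is the invocation of Proposition~\ref{prop:forpath}: one must confirm that throughout the sequence every internal vertex of every forbidding path really is coloured from its assigned list. This however is immediate because each such internal vertex is joined in $G'$ to exactly the subset of $\{y_0,\dots,y_{p-1}\}$ that forces its colour into the prescribed interval, and the $y_j$'s are fixed (indeed $\alpha(y_j)=\beta(y_j)=j$ and the full neighbourhood structure on $\{y_0,\dots,y_{p-1}\}$ pins each $y_j$ in every $(p,q)$-colouring that reconfigures to $\alpha$).
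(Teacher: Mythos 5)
Your proposal is correct and follows the same route as the paper's (very terse) argument: compose each colouring in a reconfiguration sequence from $\alpha$ to $\beta$ with the quotient map $\varphi:\{0,\dots,p-1\}\to\{0,\dots,k-1\}$, and use Proposition~\ref{prop:forpath} to rule out two adjacent $G$-vertices both landing in $S_0$. You have simply spelled out the details that the paper leaves implicit --- that each $S_j$ for $j\geq 1$ is independent in $G_{p,q}$, that $\varphi\circ\gamma$ is the identity so the endpoints match $f$ and $g$, that a single-vertex change in $G'$ induces at most a single-vertex change in $G$, and that the $y_j$'s stay fixed so the list constraints on the forbidding paths persist throughout.
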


\subsection{Recolouring \texorpdfstring{$\boldsymbol{G'}$}{G'}}

To complete the reduction we need to prove that if $f$ reconfigures to $g$, then $\alpha$ reconfigures to $\beta$. Let $(h_i)_{i=1}^s$ be any reconfiguration sequence taking $f$ to $g$. We show that there is a sequence $(\eta_i)_{i=1}^{s}$ of $(p,q)$-colourings of $G'$ such that
\begin{itemize}
\item $\eta_1=\alpha$ and $\eta_{s}=\beta$,
\item the restriction of $\eta_i$ to $G$ is $\gamma h_i$ for $1\leq i\leq s$, and
\item $\eta_i$ reconfigures to $\eta_{i+1}$ for $1\leq i\leq s-1$. 
\end{itemize}
Clearly this will prove that $\alpha$ reconfigures to $\beta$ and complete the proof of Theorem~\ref{circularThm}. 
%Also, notice that the colouring $\eta_s$ is the same as $\beta$ except on the vertices of the forbidding paths. In order to reconfigure $\eta_s$ to $\beta$, we start by changing the colour of $x_0^{uv}$ to $\beta\left(x_0^{uv}\right)$, which is guaranteed to be compatible with $\beta(u)$ and to the left of the colour assigned to $x_1^{uv}$ by $\eta_s$. We then shift the colour of each vertex $x_i^{uv}$ for $1\leq i\leq t-1$ to it smallest available colour (which will be $\beta\left(x_i^{uv}\right)$). Finally, we are free to change the colour of $x_t^{uv}$ to $\beta(x_t^{uv})$. Therefore, it suffices to construct the sequence $(\eta_i)_{i=1}^s$ as above. 

Before tackling the general case we illustrate our method with the example
in Figure~\ref{fig:examplerecolour}, in which $(p,q)=(18,4)$. In this case, the colourings $\eta_i$ map $V(G)$ to the colours $\{0,6,10,14\}$. Let $1\leq j\leq s-1$ be fixed and suppose that $(\eta_i)_{i=1}^j$ have been constructed to satisfy the conditions above; our goal is to construct the colouring $\eta_{j+1}$ and a reconfiguration sequence taking $\eta_j$ to $\eta_{j+1}$. 

By definition, $h_j$ and $h_{j+1}$ differ on at most one vertex, say $u\in V(G)$. Suppose, for example that $h_j(u)=0$ and $h_{j+1}(u)=1$; this requires us change the colour of $u$ from $0$ (its current colour under $\eta_j$) to $6$ (its desired colour under $\eta_{j+1}$) without changing the colour of any other vertex of $G$.  We remark that this is the most involved case, and corresponds to 
Case~(c) in Lemma~\ref{lem:recolourpath} below.  The other cases use similar ideas but are more straightforward.  For each 
 vertex $v\in V(G)$ to which $u$ is adjacent, $v$ must receive a colour from $\{2,3\}$ under $h_j$ to facilitate the recolouring of $u$, which implies that $\eta_j(v)\in \{ 10, 14 \}$.  Each step below is applied for each neighbour $v$ of $u$, one by one, before moving on to the subsequent step.
The recoloured vertices at each step are shown as white vertices in Figure~\ref{fig:examplerecolour} with the new colours indicated
in boldface.  
\begin{list}{(\roman{enumi})}{\usecounter{enumi}}
 \item First, recolour $x_3^{vu}$ to $13$ and $x_4^{vu}$ to $7$. 
  Then, recolour $P_{uv}$ by  recolouring $x_4^{uv}$ to $5$, $x_3^{uv}$ to $1$, $x_2^{uv}$ to $15$, $x_1^{uv}$ to $11$ and $x_0^{uv}$ to $7$.  At this point all vertices of $\{x_0^{uv},x_t^{vu}: v\in N_G(u)\}$ have colour $7$.
 \item Now, change the colour of $u$ to $3$ (temporarily).
 \item Next, change the colour of $x_0^{uv}$ to $17$ and $x_t^{vu}$ to $17$.
  At this point all vertices of  $\{x_0^{uv},x_t^{vu}: v\in N_G(u)\}$ have colour $17$. 
  Recolour $u$ to $6$. 
 \item Recolour the paths to the standard colourings: $x_i^{uv}$ is coloured $iq$ for $i=0,1, \dots, t-1$
 and $x_t^{uv}$ is coloured $0$; $x_{t-1}^{vu}$ is coloured $12$ and $x_{t}^{vu}$ is coloured $0$. 
\end{list}

\begin{figure}
\vspace{-0.5em}
\begin{center}
\begin{tikzpicture}[scale=0.7]

\node[smallblack,label={$u$}] (u) at (0,1) {};
\node[smallblack,label={$x_0^{uv}$}] (Au) at (1,2) {};
\node[smallblack,label=below:{$x_4^{vu}$}] (Bu) at (1,0) {};
\node[smallblack,label={$x_1^{uv}$}] (Aup) at (2,2.5) {};
\node[smallblack,label=below:{$x_3^{vu}$}] (Bup) at (2,-0.5) {};
\node[smallblack,label={$x_2^{uv}$}] (C1) at (3,2.7) {};
\node[smallblack,label=below:{$x_2^{vu}$}] (C2) at (3,-0.7) {};
\node[smallblack,label={$x_3^{uv}$}] (Bvp) at (4,2.5) {};
\node[smallblack,label=below:{$x_1^{vu}$}] (Avp) at (4,-0.5) {};
\node[smallblack,label={$x_4^{uv}$}] (Bv) at (5,2) {};
\node[smallblack,label=below:{$x_0^{vu}$}] (Av) at (5,0) {};
\node[smallblack,label={$v$}] (v) at (6,1) {};

\draw[thick,blue] (v)--(Av)--(Avp)--(C2)--(Bup)--(Bu)--(u);
\draw[thick,blue] (v)--(Bv)--(Bvp)--(C1)--(Aup)--(Au)--(u);
\draw[thick,blue] (u)--(v);

\node[label=below:{$0$}] at (u) {};
\node[label=below:{$4$}] at (Au) {};
\node[label=below:{$8$}] at (Aup) {};
\node[label=below:{$12$}] at (C1) {};
\node[label=below:{$16$}] at (Bvp) {};
\node[label=below:{$2$}] at (Bv) {};
\node[label=below:{$10$}] at (v) {};

\node[label={$4$}] at (Bu) {};
\node[label={$12$}] at (Bup) {};
\node[label={$8$}] at (C2) {};
\node[label={$4$}] at (Avp) {};
\node[label={$0$}] at (Av) {};

\node at (3,1.5) {$\eta_j$};

\begin{scope}[xshift=8cm]
\node[smallblack] (u) at (0,1) {};
\node[smallwhite] (Au) at (1,2) {};
\node[smallwhite] (Bu) at (1,0) {};
\node[smallwhite] (Aup) at (2,2.5) {};
\node[smallwhite] (Bup) at (2,-0.5) {};
\node[smallwhite] (C1) at (3,2.7) {};
\node[smallblack] (C2) at (3,-0.7) {};
\node[smallwhite] (Bvp) at (4,2.5) {};
\node[smallblack] (Avp) at (4,-0.5) {};
\node[smallwhite] (Bv) at (5,2) {};
\node[smallblack] (Av) at (5,0) {};
\node[smallblack] (v) at (6,1) {};

\draw[thick,blue] (v)--(Av)--(Avp)--(C2)--(Bup)--(Bu)--(u);
\draw[thick,blue] (v)--(Bv)--(Bvp)--(C1)--(Aup)--(Au)--(u);
\draw[thick,blue] (u)--(v);

\node[label=below:{$0$}] at (u) {};
\node[label=below:{$\mathbf{7}$}] at (Au) {};
\node[label=below:{$\mathbf{11}$}] at (Aup) {};
\node[label=below:{$\mathbf{15}$}] at (C1) {};
\node[label=below:{$\mathbf{1}$}] at (Bvp) {};
\node[label=below:{$\mathbf{5}$}] at (Bv) {};
\node[label=below:{$10$}] at (v) {};

\node[label={$\mathbf{7}$}] at (Bu) {};
\node[label={$\mathbf{13}$}] at (Bup) {};
\node[label={$8$}] at (C2) {};
\node[label={$4$}] at (Avp) {};
\node[label={$0$}] at (Av) {};

\node at (3,1.5) {};
\node at (3,3.5) {(i)};
\end{scope}

\begin{scope}[xshift=16cm]
\node[smallwhite] (u) at (0,1) {};
\node[smallblack] (Au) at (1,2) {};
\node[smallblack] (Bu) at (1,0) {};
\node[smallblack] (Aup) at (2,2.5) {};
\node[smallblack] (Bup) at (2,-0.5) {};
\node[smallblack] (C1) at (3,2.7) {};
\node[smallblack] (C2) at (3,-0.7) {};
\node[smallblack] (Bvp) at (4,2.5) {};
\node[smallblack] (Avp) at (4,-0.5) {};
\node[smallblack] (Bv) at (5,2) {};
\node[smallblack] (Av) at (5,0) {};
\node[smallblack] (v) at (6,1) {};

\draw[thick,blue] (v)--(Av)--(Avp)--(C2)--(Bup)--(Bu)--(u);
\draw[thick,blue] (v)--(Bv)--(Bvp)--(C1)--(Aup)--(Au)--(u);
\draw[thick,blue] (u)--(v);

\node[label=below:{$\mathbf{3}$}] at (u) {};
\node[label=below:{$7$}] at (Au) {};
\node[label=below:{$11$}] at (Aup) {};
\node[label=below:{$15$}] at (C1) {};
\node[label=below:{$1$}] at (Bvp) {};
\node[label=below:{$5$}] at (Bv) {};
\node[label=below:{$10$}] at (v) {};

\node[label={$7$}] at (Bu) {};
\node[label={$13$}] at (Bup) {};
\node[label={$8$}] at (C2) {};
\node[label={$4$}] at (Avp) {};
\node[label={$0$}] at (Av) {};

\node at (3,1.5) {};
\node at (3,3.5) {(ii)};
\end{scope}

\begin{scope}[xshift=4cm, yshift=-5cm]
\node[smallwhite] (u) at (0,1) {};
\node[smallwhite] (Au) at (1,2) {};
\node[smallwhite] (Bu) at (1,0) {};
\node[smallblack] (Aup) at (2,2.5) {};
\node[smallblack] (Bup) at (2,-0.5) {};
\node[smallblack] (C1) at (3,2.7) {};
\node[smallblack] (C2) at (3,-0.7) {};
\node[smallblack] (Bvp) at (4,2.5) {};
\node[smallblack] (Avp) at (4,-0.5) {};
\node[smallblack] (Bv) at (5,2) {};
\node[smallblack] (Av) at (5,0) {};
\node[smallblack] (v) at (6,1) {};

\draw[thick,blue] (v)--(Av)--(Avp)--(C2)--(Bup)--(Bu)--(u);
\draw[thick,blue] (v)--(Bv)--(Bvp)--(C1)--(Aup)--(Au)--(u);
\draw[thick,blue] (u)--(v);

\node[label=below:{$\mathbf{6}$}] at (u) {};
\node[label=below:{$\mathbf{17}$}] at (Au) {};
\node[label=below:{$11$}] at (Aup) {};
\node[label=below:{$15$}] at (C1) {};
\node[label=below:{$1$}] at (Bvp) {};
\node[label=below:{$5$}] at (Bv) {};
\node[label=below:{$10$}] at (v) {};

\node[label={$\mathbf{17}$}] at (Bu) {};
\node[label={${13}$}] at (Bup) {};
\node[label={${8}$}] at (C2) {};
\node[label={${4}$}] at (Avp) {};
\node[label={${0}$}] at (Av) {};

\node at (3,1.5) {};
\node at (3,3.5) {(iii)};
\end{scope}

\begin{scope}[xshift=12cm, yshift=-5cm]
\node[smallblack] (u) at (0,1) {};
\node[smallwhite] (Au) at (1,2) {};
\node[smallwhite] (Bu) at (1,0) {};
\node[smallwhite] (Aup) at (2,2.5) {};
\node[smallwhite] (Bup) at (2,-0.5) {};
\node[smallwhite] (C1) at (3,2.7) {};
\node[smallblack] (C2) at (3,-0.7) {};
\node[smallwhite] (Bvp) at (4,2.5) {};
\node[smallblack] (Avp) at (4,-0.5) {};
\node[smallwhite] (Bv) at (5,2) {};
\node[smallblack] (Av) at (5,0) {};
\node[smallblack] (v) at (6,1) {};

\draw[thick,blue] (v)--(Av)--(Avp)--(C2)--(Bup)--(Bu)--(u);
\draw[thick,blue] (v)--(Bv)--(Bvp)--(C1)--(Aup)--(Au)--(u);
\draw[thick,blue] (u)--(v);

\node[label=below:{$6$}] at (u) {};
\node[label=below:{$\mathbf{0}$}] at (Au) {};
\node[label=below:{$\mathbf{4}$}] at (Aup) {};
\node[label=below:{$\mathbf{8}$}] at (C1) {};
\node[label=below:{$\mathbf{12}$}] at (Bvp) {};
\node[label=below:{$\mathbf{0}$}] at (Bv) {};
\node[label=below:{$10$}] at (v) {};

\node[label={$\mathbf{0}$}] at (Bu) {};
\node[label={$\mathbf{12}$}] at (Bup) {};
\node[label={$8$}] at (C2) {};
\node[label={$4$}] at (Avp) {};
\node[label={$0$}] at (Av) {};

\node at (3,1.5) {$\eta_{j+1}$};
\node at (3,3.5) {(iv)};
\end{scope}
\end{tikzpicture}
\end{center}
\vspace{-0.5em}

\caption{Recolouring $u$ from $0$ to $6$ in the $(18,4)$ case.}
\label{fig:examplerecolour}
\end{figure}

We now describe the recolouring technique in general.

\begin{lemma}\label{lem:recolourpath}
Let $h$ and $h'$ be $k$-colourings of $G$ which differ on a unique vertex $u$ and let $\eta$ be a $(p,q)$-colouring of $G'$ such that 
\begin{itemize}
\item $\eta$ is the standard path colouring for all paths $P_{uv}$,
\item $\eta(y_i)=i$ for all $i$, and 
\item the restriction of $\eta$ to $G$ is $\gamma h$. 
\end{itemize} 
Then there exists a $(p,q)$-colouring $\eta'$ of $G'$ such that $\eta'$ is the standard path colouring for all paths $P_{uv}$, the restriction of $\eta'$ to $G$ is $\gamma h'$, and $\eta$ reconfigures to $\eta'$.
\end{lemma}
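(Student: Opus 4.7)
The proof proceeds by case analysis on $(c,c') := (h(u), h'(u))$. Since $h$ and $h'$ are proper $k$-colourings differing only at $u$, we have $c \neq c'$ and no $G$-neighbour of $u$ has colour $c$ or $c'$. There are essentially two cases: (a) both $c$ and $c'$ are nonzero, and (b) exactly one of them is zero. Case (b) splits into $c=0, c' \neq 0$ and $c \neq 0, c' = 0$; since the two subcases are mutual reverses of each other as reconfiguration sequences, only one needs to be written out in detail.

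In Case (a), because the standard colouring of every $P_{uv}$ and $P_{vu}$ depends only on whether its endpoints equal $0$, and neither $c$ nor $c'$ is $0$, none of the paths need to change. I would simply define $\eta'$ to agree with $\eta$ everywhere except at $u$, where $\eta'(u) := \gamma(c') = c'q + r$, and verify that this single step is legal. The neighbours of $u$ in $G'$ consist of the $G$-neighbours of $u$, whose $\eta$-values lie in $\{0\} \cup \{jq+r : 1 \leq j \leq k-1\} \setminus \{\gamma(c), \gamma(c')\}$, together with the path endpoints $x_0^{uv}$ and $x_t^{vu}$, whose standard colour is $0$ (or $r$ in the $v$-zero subcase). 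A direct check using $1 \leq c' \leq k-1$ and $p = kq+r$ shows each such value is at Euclidean distance in $[q, p-q]$ from $c'q + r$: the gap to $0$ is $c'q+r \in [q+r, (k-1)q+r]$, the gap to $r$ is $c'q$, and the gap to any $jq+r$ with $j \neq c'$ is $|j-c'|q \in [q, (k-1)q]$, all contained in $[q, p-q]$.

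For Case (b), say $c = 0$ and $c' \neq 0$. Because $h(u) = 0$ and $h$ is proper, no $G$-neighbour of $u$ has $h$-colour $0$, so every $P_{uv}$ is currently in its $(\alpha(u)=0)$-standard form and must end in the $(\alpha(u) \neq 0)$-standard form. Following the template of Figure~\ref{fig:examplerecolour}, I would process the edges $uv$ (for $v \in N(u)$) one at a time and perform for each a four-phase update: (i)~slide the internal colours of $P_{uv}$ and $P_{vu}$ rightward within their lists so that $x_0^{uv}$ and $x_t^{vu}$ pick up a colour compatible with both $0$ and some chosen intermediate value $z \in S_0$ that is itself compatible with $\gamma(c')$; (ii)~once every incident path has been updated, recolour $u$ from $0$ to $z$; (iii)~slide each $x_0^{uv}, x_t^{vu}$ further rightward to the colour $p-1$, which is compatible with both $z$ and $\gamma(c')$, then recolour $u$ from $z$ to $\gamma(c')$; (iv)~slide each path to its new $(\alpha(u) \neq 0)$-standard form. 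Proposition~\ref{prop:downandright} guarantees that each individual slide of one interior path vertex respects its list and remains compatible with its two path neighbours, provided the vertices are slid in the correct order (from the $v$-end inward). The main obstacle will be verifying simultaneous compatibility at the two moments when $u$'s colour changes: the intermediate $z$ must be compatible with $0$, with $\gamma(c')$, and with the common boundary colour sitting on every $x_0^{uv}, x_t^{vu}$ at that instant. The existence of such a $z$ and of a workable temporary boundary colour relies on the hypothesis $p \geq 4q$, which leaves each interval enough room for the required shifts. Once these compatibility checks are carried out, concatenating the sequences produced for each neighbour $v$ yields a reconfiguration of $\eta$ into an extension $\eta'$ of $\gamma h'$ in standard form, as required.
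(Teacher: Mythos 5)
Your Case~(a) is correct and matches the paper's Case~(a), and the observation that the ``one endpoint is zero'' subcases are reverses of one another is also sound (this is how the paper handles its Cases~(d) and~(e)). The gap is in Case~(b), where your uniform four-phase recipe fails.

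The concrete problem is the claim in your phase~(iii) that $p-1$ is compatible with $\gamma(c')$ for every nonzero $c'$. When $c' = k-1$, $\gamma(c') = (k-1)q + r = p-q$, and $|(p-1)-(p-q)| = q-1 < q$, so $p-1$ and $\gamma(k-1)$ are \emph{not} adjacent in $G_{p,q}$. Thus the step ``slide to $p-1$, then recolour $u$ to $\gamma(c')$'' is illegal in this subcase. A second, independent obstruction: for $c' \geq 2$, a $G$-neighbour $v$ of $u$ can satisfy $h(v)=1$, i.e.\ $\eta(v)=q+r$. The intermediate colour $z$ must then be compatible with $q+r$, which forces $z \in [0,r]$; but every such $z$ is at circular distance $\leq r+1 \leq q$ from $p-1$, so $p-1$ is generally not compatible with $z$ either. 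In short, there is no single choice of $z$ and bridge colour that works uniformly for all $c' \in \{1,\dots,k-1\}$: the constraints imposed by $c'=1$ (bridge adjacent to $q+r$) and by $c'=k-1$ (bridge adjacent to $p-q$) push the bridge to opposite ends of $L(x_0^{uv})=[p-1,2q-1]$, and the constraint from possible $G$-neighbours coloured $q+r$ pins $z$ near $0$.

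The paper resolves this by separating $c' = 1$ from $c' \geq 2$. When $c' \geq 2$, no intermediate step is needed at all: in the standard colouring with $\eta(u)=0$, the boundary vertices $x_0^{uv}, x_t^{vu}$ carry colour $q$, and $q$ is already compatible with $\gamma(c') = c'q + r$ whenever $c' \geq 2$ (distance $(c'-1)q + r \geq q$), so one recolours $u$ directly and then slides the paths to their new standard form (the paper's Case~(b)). The elaborate intermediate manoeuvre is reserved for $c' = 1$ only (the paper's Case~(c)); there $h(v) \geq 2$ for every $G$-neighbour $v$ because $h$ and $h'$ are both proper, so $\eta(v) \geq 2q+r$, which makes $z = q-1$ a valid intermediate, and $p-1$ is indeed compatible with $\gamma(1) = q+r$ (distance $(k-1)q-1 \geq q$ for $k\geq 3$). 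If you restructure your Case~(b) along these lines, the rest of your argument goes through.
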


\begin{proof}
We provide a reconfiguration sequence which takes $\eta$ to a colouring $\eta'$ with the desired properties. The proof is divided into cases. In each case, each of the reconfiguration steps is done for every neighbour $v$ of $u$, one by one, before moving on to the subsequent step.

As mentioned above, when extending $\alpha$ from $V(G)$ to $V(G')$, the colour for $x_t^{uv}$ is selected from the interval $[p-1,q+r]$.  In some cases below we may assign $x_t^{uv}$ a colour outside of this interval (but still on its list).  However, the assignment will be compatible with the current colour of $x_{t-1}^{uv}$ and at the end of the case the colour of $x_t^{uv}$ will be in $[p-1,q+r]$.

Let $uv$ be an edge in $G$ and assume the paths $P_{uv}$ and $P_{vu}$ have standard path colourings.

\begin{description}
  \item[Case (a): $\eta(u), \eta'(u) \in \{q+r, 2q+r, \dots, (k-1)q+r \}$]~
   In standard path colourings $\eta(x_0^{uv}), \eta(x_t^{vu}) \in \{ 0, r \}$ for each
   $v \in N(u)$.  The colour $\eta'(u)$ is compatible with both $0$ and $r$, so we may
   change the colour of $u$ to $\eta'(u)$.  The paths still have standard path colourings.

  \item[Case (b): $\eta(u)=0, \eta'(u) \in \{ 2q+r, 3q+r, \dots, (k-1)q+r \}$]~
  \begin{list}{(\roman{enumii})}{\usecounter{enumii}}
    \item By the definition of standard path colourings, $\eta(x_0^{uv})=\eta(x_t^{vu})=q$. Change
    the colour of $u$ to $\eta'(u)$.
    \item Recolour $x_t^{vu}$ to $0$.  The path $P_{vu}$ now has a standard path colouring.
    \item Recolour $x_i^{uv}$ to $iq$ for $i=0, 1, \dots, t-1$ and $x_t^{uv}$ to $0$.  The path
    $P_{uv}$ now has a standard path colouring.
  \end{list}

  \item[Case (c): $\eta(u)=0, \eta'(u) = q+r$]~
  \begin{list}{(\roman{enumii})}{\usecounter{enumii}}
    \item We begin observing that $h(u)=0$ and $h'(u)=1$, which implies that $h(v)\in\{2,\dots,k-1\}$. Thus, $\eta(v) \in \{2q+r, 3q+r, \dots, (k-1)q+r \}$. For $i=t,\dots,0$, in order, recolour $x_i^{uv}$ to $(i+2)q-1$.
    \item Change the colours of 
    %$x_t^{vu}$ to $q+r-1$, 
    $x_{t-1}^{vu}$ to $-q-1$, and $x_t^{vu}$ to $2q-1$. These colours are compatible with each other, and by Propsoition~\ref{prop:downandright}, the colour of $x_{t-1}^{vu}$ and $x_{t-2}^{vu}$ are compatible as well. 
    \item At this point all vertices of $\left\{x_0^{uv}, x_t^{vu}: v\in N_G(u)\right\}$  have colour $2q-1$.
    Recolour $u$ to $q-1$ (temporarily). Note that this colour is compatible with all neighbours $v$ of $u$ in $G$ since $\eta(v) \in \{ 2q+r, 3q+r, \dots, (k-1)q+r \}$.
    \item Recolour $x_0^{uv}$ to $p-1$ and $x_t^{vu}$ to $p-1$.
    \item At this point all vertices of $\left\{x_0^{uv}, x_t^{vu}: v\in N_G(u)\right\}$  are coloured $p-1$. Change the colour of $u$ to $q+r$.
    \item Recolour $x_{t-1}^{vu}$ to $r-2q$ and $x_t^{vu}$ to $0$.  Recolour $x_i^{uv}$ to $iq$ for
    $i=0, 1, \dots, t-1$ and $x_t^{uv}$ to $0$.  The paths now have the standard path colourings.
  \end{list}
    
  \item[Case (d): $\eta(u) \in \{ 2q+r, 3q+r, \dots (k-1)q+r \}, \eta'(u) = 0$]~
  \begin{list}{(\roman{enumii})}{\usecounter{enumii}}
    \item Change the colour of $x_t^{vu}$ to $q$, and recolour $x_i^{uv}$ to $(i+1)q$ for
    $i = t, t-1, \dots, 0$. 
    \item  At this point all vertices of $\left\{x_0^{uv}, x_t^{vu}: v\in N_G(u)\right\}$ have colour $q$.
    Now change the colour of $u$ to $0$. 
    \item The paths have standard path colourings.
  \end{list}  
  
  \item[Case (e): $\eta(u) = q+r, \eta'(u) = 0$]~
  \begin{list}{(\roman{enumii})}{\usecounter{enumii}}
    \item Recolour $x_t^{vu}$ to $p-1$ and $x_0^{uv}$ to $p-1$.
    \item At this point all vertices of $\left\{x_0^{uv}, x_t^{vu}: v\in N_G(u)\right\}$ have colour $p-1$.
    Change the colour of $u$ to $q-1$ (temporarily). Note that this colour is compatible with all neighbours $v$ of $u$ in $G$ since $\eta(v) \in \{ 2q+r, 3q+r, \dots, (k-1)q+r \}$.
    \item Since $\eta(v) \in \{2q+r, 3q+r, \dots, (k-1)q+r \}$, for $i=t,\dots,0$ we can recolour $x_i^{uv}$ to $(i+2)q-1$. 
    \item Change the colour of $x_{t-1}^{vu}$ to $-q-1$, $x_t^{vu}$ to $2q-1$ and $u$ to $0$.
    \item Finally, recolour $x_i^{uv}$ to $(i+1)q$ for $i=0, 1, \dots, t$. Recolour
    %$x_t^{vu}$ to $q+r-1$, 
    $x_t^{vu}$ to $q$, and $x_{t-1}^{vu}$ to $r-2q$.  The paths have 
    the standard path colourings.
  \end{list}
\end{description}
This completes the proof of the lemma, and of Theorem~\ref{circularThm}.
\end{proof}

\section{Cycles in Graphs of Large Chromatic Number}

A result of Chen and Saito~\cite{Chen1994} says that graphs without cycles of length divisible by three are $2$-degenerate. In particular, it follows that such graphs are $3$-colourable. Recently, Wrochna discovered a short and elegant proof of the $3$-colourability result using  ideas from~\cite{3col}. He has decided not to publish it himself, but has given us permission to include it here. 

\begin{thm}[Chen and Saito~\cite{Chen1994}]
\label{0mod3}
If $G$ contains no cycle of length $0\bmod3$, then $G$ is $3$-colourable.
\end{thm}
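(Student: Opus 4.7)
The plan is to prove the contrapositive: if $G$ is not $3$-colourable, then $G$ contains a cycle of length $\equiv 0 \pmod 3$. I would use the edge-labelling framework of Section~\ref{circularPoly}, particularly Proposition~\ref{prop:inducedlabelling}, which says that a $3$-colouring of $G$ is equivalent to an assignment $\psi : E(G) \to \{1,2\}$ (after fixing an arbitrary orientation) satisfying $\psi(C) \equiv 0 \pmod 3$ for every cycle $C$. For any map $f : V(G) \to \mathbb{Z}_3$, the induced labelling $\psi_f(\overrightarrow{uv}) = f(v) - f(u) \pmod 3$ automatically satisfies $\psi_f(C) \equiv 0 \pmod 3$ on all cycles, and a proper $3$-colouring corresponds precisely to an $f$ for which $\psi_f$ is nowhere zero.

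Among all $f : V(G) \to \mathbb{Z}_3$, I would select one that minimizes the number of \emph{monochromatic edges} (those $e$ with $\psi_f(e) = 0$). If this minimum is zero, $f$ is a proper $3$-colouring and we are done; otherwise a monochromatic edge $e_0 = u_0 v_0$ exists, and the task is to extract from this setup a cycle of length $\equiv 0 \pmod 3$. The minimality condition, interpreted through the cut-relabelling perspective of Theorem~\ref{thm:cocycle}, yields strong local constraints: writing $n_i(v)$ for the number of neighbours $w$ of $v$ with $f(w) - f(v) \equiv i \pmod 3$, the fact that neither single-vertex shift $f(v) \mapsto f(v) \pm 1$ can decrease the count gives $n_1(v), n_2(v) \geq n_0(v)$ for every vertex $v$; more generally, applying the analogous argument to an arbitrary subset $X \subseteq V(G)$ yields that at most a $1/3$ fraction of the edges in $\partial(X)$ are monochromatic.

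Starting from $e_0$, I would then try to build a walk in $G$ that exploits these constraints, for example a walk alternating between monochromatic edges and $(+1)$-shift edges; such edges are guaranteed to exist at each visited vertex that is incident to a monochromatic edge, since $n_0(v), n_1(v) \geq 1$ there. The accumulated colour shift along the walk is forced to be zero when the walk closes up (since both endpoints have the same colour), and the closed subwalk then contains a cycle in $G$. Combined with the cycle-weight identity $\psi_f(C) \equiv 0 \pmod 3$ and the structure of the walk's shift pattern, this should pin down the length of that cycle modulo $3$ and yield a contradiction with the hypothesis.

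The main obstacle I expect is forcing the extracted cycle to have length exactly $\equiv 0 \pmod 3$. Under the naive alternating-walk pattern, a closed subwalk of length $\ell$ accumulates a colour shift of roughly $\ell/2 \pmod 3$, so that closure forces $\ell \equiv 0 \pmod 6$; however, the standard pigeonhole application gives only that some closed subwalk exists, and its length might be congruent to $1$ or $5 \pmod 6$ rather than $0$. The elegance of Wrochna's argument presumably lies in either a more cleverly engineered walk (perhaps branching on the choice of $(+1)$- versus $(-1)$-shifts to guarantee the correct parity) or a more global construction that uses the cycle-weight invariance of Theorem~\ref{thm:cocycle} to exhibit the required $0 \bmod 3$ cycle directly, without ad-hoc walk-tracing.
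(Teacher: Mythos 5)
Your proposal has a genuine gap, one you yourself flag at the end: you never pin down why the closed walk you extract should have length $\equiv 0 \pmod 3$. The minimization setup (choose $f : V(G) \to \mathbb{Z}_3$ minimizing monochromatic edges) and the local constraints you derive ($n_1(v), n_2(v) \geq n_0(v)$) are fine observations, but the walk-tracing step does not close: you have no mechanism that forces the walk's length modulo 3 to be controlled, rather than its accumulated shift being controlled. Those are different quantities, and the naive alternating pattern does not convert one into the other. Without that, the argument stalls exactly at the point where the theorem's conclusion has to appear, which is the whole content of the statement.

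The paper's proof (Wrochna's) avoids this entirely and runs through the reconfiguration machinery you had in hand but did not use. Take a minimal counterexample $G$ with edge $e=uv$, and a $3$-colouring $f$ of $G' = G - e$; we may assume $f(u) = f(v) = 0$. Let $g = f + 1 \pmod 3$. Then $\varphi_f = \varphi_g$ as edge-labellings (both are the same function $f(y)-f(x) \bmod 3$ since the shift cancels), so every cycle has equal weight under $f$ and $g$. The decisive observation is that a directed cycle in $D_f$ has its edge labels summing to its length mod $3$ and also to $0$ mod $3$, so a directed cycle in $D_f$ \emph{is} a cycle of length $0 \bmod 3$ in $G'$; since $G'$ has none, $D_f$ is acyclic, hence there are no fixed vertices. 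Theorem~\ref{thm:char} then says $f$ reconfigures to $g$. Since $g(u) = g(v) = 1 \neq 0$, there is a first colouring $h$ in the reconfiguration sequence with $h(u) \neq 0$ or $h(v) \neq 0$; as each step recolours one vertex, at most one of $u,v$ has left colour $0$, so $h(u) \neq h(v)$ and $h$ is a proper $3$-colouring of $G$. This is where the hypothesis "no cycle of length $0 \bmod 3$" is actually used: it certifies the absence of fixed vertices, not a constraint on walks. If you want to repair your proposal, the missing idea is precisely this translation between $0 \bmod 3$ cycles and directed cycles in $D_f$, followed by the reconfiguration argument (or, equivalently, iterated sink-removal on the acyclic digraph, as the paper does in the $k$-colour generalization in Theorem~\ref{0modk}).
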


\begin{proof}[Proof (Wrochna)]
Suppose that the statement is false and let $G$ be a counterexample for which $|E(G)|$ is minimum. Define $G' := G-e$ where $e=uv$ is an edge of $G$.
%We proceed by induction on $|E(G)|$, where the case $|E(G)|=0$ is trivial. Let $G$ be a graph with no cycle of length $0\bmod 3$, let $e=uv$ be any edge of $G$, and define $G':=G-e$. 
By hypothesis, $G'$ admits a $3$-colouring, say $f:V(G')\to \{0,1,2\}$. If $f(u)\neq f(v)$, then we are done.  So we assume, without loss of generality, that $f(u)=f(v)=0$. 

Now, let $g:V(G)\to \{0,1,2\}$ be defined by $g(x)= f(x)+1\bmod3$ for all $x\in V(G)$. It is clear that
$\varphi_f(C)=\varphi_g(C)$ for all cycles $C$ in $G'$. Also, since $G'$ has no cycles of length $0\bmod3$, there are no directed cycles in $D_f$ or $D_g$, and therefore no fixed vertices either. Thus, by Theorem~\ref{thm:char}, $f$ can be reconfigured to $g$. Let $h$ be the first colouring of the reconfiguration sequence such that $h(u)\neq 0$ or $h(v)\neq 0$. Since the reconfiguration sequence colours only one vertex in each step, it is clear that $h$ is a $3$-colouring of $G$, which completes the proof.
\end{proof}

A closer look at the proof shows that it actually yields something slightly stronger: \emph{If $G$ contains an edge $e$ such that $G-e$ has no cycle of length $0\bmod 3$, then $\chi(G)\leq 3$.} Using a similar strategy, we can generalize this to $k$-colourings. 

\begin{thm}
\label{0modk}
If $G$ contains an edge $e$ such that $G-e$ contains fewer than $\frac{(k-1)!}{2}$ cycles of length $0\bmod k$, then $\chi(G)\leq k$. 
\end{thm}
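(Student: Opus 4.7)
The plan is to prove the contrapositive: assuming $k\geq 3$, if $\chi(G)>k$, then for every edge $e=uv$ of $G$, the graph $G-e$ contains at least $(k-1)!/2$ distinct cycles of length divisible by $k$. I proceed by induction on $|E(G)|$. If $\chi(G-e)>k$, the inductive hypothesis applied to $G-e$ (and any of its edges) already produces the required cycles, each of which is a cycle of $G-e$. So assume $\chi(G-e)\leq k$ and fix a $k$-colouring $f$ of $G-e$; since $f$ does not extend to $G$, we must have $f(u)=f(v)$, and after permuting colours we take this common value to be $0$.

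The heart of the argument is to associate to each cyclic ordering of the colour set a distinct ``monotone'' cycle of length $0\bmod k$ in $G-e$. For each cyclic ordering $H=(0,c_1,c_2,\dots,c_{k-1})$, let $\pi_H$ be the permutation of $\{0,\dots,k-1\}$ sending $c_i\mapsto c_{(i+1)\bmod k}$, and define the auxiliary digraph $D_H$ on $V(G-e)$ by placing an arc from $x$ to $y$ whenever $xy\in E(G-e)$ and $f(y)=\pi_H(f(x))$. The crucial claim is that $D_H$ contains a directed walk from $u$ to $v$. Suppose not, and let $R$ be the set of vertices reachable from $u$ in $D_H$, so $u\in R$ and $v\notin R$. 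Define $f':V(G)\to\{0,\dots,k-1\}$ by
\[
f'(x)=\begin{cases}\pi_H(f(x)) & \text{if } x\in R,\\ f(x) & \text{otherwise.}\end{cases}
\]
Edges of $G-e$ lying entirely inside or entirely outside $R$ are properly coloured because $\pi_H$ is a bijection. For an edge $xy\in E(G-e)$ with $x\in R$ and $y\notin R$, having $f(y)=\pi_H(f(x))$ would place the arc $x\to y$ in $D_H$ and force $y\in R$; hence $f'(x)=\pi_H(f(x))\neq f(y)=f'(y)$. Finally, $f'(u)=\pi_H(0)\neq 0=f'(v)$, so $f'$ is a proper $k$-colouring of all of $G$, contradicting $\chi(G)>k$.

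Applying the same argument with $H$ replaced by its reverse $H^{-1}$, and using that $D_{H^{-1}}$ is the reversal of $D_H$, produces a directed walk from $v$ to $u$ in $D_H$. Concatenating yields a closed directed walk at $u$ of positive length, which must contain a directed cycle $C_H$ in $D_H$. Since each arc of $D_H$ advances the colour by one step along $H$, the cyclic sequence of colours read around $C_H$ is a repetition of the word $H$; in particular, $|C_H|$ is a multiple of $k$.

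Finally, I argue that distinct equivalence classes of cyclic orderings give distinct cycles. Because the $k$ colours in $H$ are pairwise distinct, the cyclic colour sequence around $C_H$ has minimal period exactly $k$, so as an unoriented cyclic word it determines the unordered pair $\{H,H^{-1}\}$. Hence $C_H\neq C_{H'}$ whenever $\{H,H^{-1}\}\neq\{H',(H')^{-1}\}$. The $(k-1)!$ cyclic orderings of $\{0,1,\dots,k-1\}$ pair into $(k-1)!/2$ classes under $H\sim H^{-1}$, yielding at least $(k-1)!/2$ distinct cycles of length $0\bmod k$ in $G-e$, as required. The main obstacle is the boundary analysis in the Kempe-style shift defining $f'$; this is handled cleanly by the very definition of $D_H$, which ensures that no edge across $\partial R$ creates a monochromatic conflict.
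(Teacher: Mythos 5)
Your proof is correct and follows the same overall strategy as the paper: for each cyclic ordering of the colours anchored at $0$, build an auxiliary digraph recording one-step colour advances along edges of $G-e$, produce a directed cycle in it, observe that its length is $0\bmod k$, and count that each undirected cycle accounts for at most two orderings. The one genuine difference is how the directed cycle is produced. You show that $D_H$ must contain a $u$-to-$v$ walk by shifting all of $R$ (the set reachable from $u$) along $\pi_H$ at once, verifying this yields a proper $k$-colouring of $G$ if $v\notin R$, and then repeating with $H^{-1}$ and concatenating. The paper instead shows that the reachable subdigraph $F_{f,\pi}[S_f]$ must contain a directed cycle because otherwise one could repeatedly recolour sinks one vertex at a time until the colour of $u$ or $v$ changes. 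These are equivalent in substance --- both are colour-rotation arguments on a reachable set --- but your one-shot Kempe-style shift is somewhat easier to verify, while the paper's sink-deletion version is deliberately phrased as a step-by-step recolouring sequence to stay close to the reconfiguration machinery of the rest of the paper. Your explicit restriction to $k\geq 3$ is also appropriate: the statement as printed is false for $k\leq 2$ (e.g.\ $K_3$ minus an edge has no cycles at all yet $\chi(K_3)=3$), and the paper leaves this hypothesis implicit.
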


\begin{proof}
We proceed by induction on $|E(G)|$, where the case $|E(G)|=0$ is trivial. Let $e=uv$ be an edge of $G$ such that $G':=G-e$ contains fewer than $\frac{(k-1)!}{2}$ cycles of length $0\bmod k$. Then, by the inductive hypothesis, $G'$ admits a $k$-colouring $f:V(G')\to \{0,\dots,k-1\}$. If $f(u)\neq f(v)$, then we are done. So we assume, without loss of generality, that $f(u)=f(v)=0$. 

Let $\pi$ be a permutation of $\{0,\dots,k-1\}$ with $\pi(0)=0$.  Let $F_{f,\pi}$ be an oriented spanning subgraph of $G'$ where $F_{f,\pi}$ contains an arc $\overrightarrow{xy}$ if  $f(x) = \pi(i)$ and $f(y) = \pi(i+1)$ for some $i$ (addition is modulo $k$). Let $S_f$ be the set of vertices in $G$ which can be reached by an oriented path in $F_{f,\pi}$ starting at $u$. Suppose that $F_{f,\pi}[S_f]$ does not contain a directed cycle. Then $F_{f,\pi}[S_f]$ contains a sink $x$. Recolour $x$ from say $\pi(j)$ to $\pi(j+1)$, i.e. recolour $x$ to $\pi(\pi^{-1}(f(x))+1)$.  This results in a proper $k$-colouring $f'$ of $G'$ in which $F_{f',\pi}[S_{f'}]=F_{f,\pi}[S_f]- x$. Repeating this procedure, one eventually reaches a situation where either $u$ or $v$ is a sink and so the colour of either $u$ or $v$ is changed; hence, there is a first $k$-colouring $g$ where $g(u)\neq g(v)$ and we are done.

Therefore, $F_{f,\pi}$ must contain a directed cycle for every permutation $\pi$ with $\pi(0)=0$. Such a cycle must have vertex colours $\pi(i), \pi(i+1), \pi(i+2), \dots, \pi(i-1), \pi(i)$, and thus have length $0 \bmod{k}$.  On the other hand, let $C$ be a cycle of length $0 \bmod{k}$ in $G'$.  The colours of $C$ under $f$ have the following property: there is a vertex of colour $0$, starting at that vertex and traversing the cycle we see all $k$ colours in the first $k$ vertices, and each successive block of $k$ vertices is coloured with the same colours in the same order.  In other words, the first $k$ vertices define a permutation $\pi$ such that $\pi(0) = 0$, and $C$ is a directed cycle in $F_{f,\pi}$.  Observe if we traverse $C$ in the opposite direction we obtain a second permutation $\pi'$ where $C$ is directed in $F_{f,\pi'}$.  Moreover if $C$ is a directed cycle in $F_{f,\sigma}$ with $\sigma(0) = 0$, then $\sigma$ must be $\pi$ or $\pi'$. Since $G'$ has fewer than $(k-1)!/2$ cycles of length $0 \bmod {k}$, there must be some permutation $\pi$ such that $F_{f,\pi}$ is acyclic.  The result follows.
\end{proof}

\begin{cor}
\label{0modkcor}
If $\chi(G)>k$, then $G$ contains at least $\frac{(k-1)!}{2}$ cycles of length $0\bmod k$.
\end{cor}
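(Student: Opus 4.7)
The plan is to derive Corollary~\ref{0modkcor} as an essentially immediate contrapositive of Theorem~\ref{0modk}. The point is that Theorem~\ref{0modk} is phrased in terms of the \emph{existence} of a single edge $e$ such that $G-e$ has few short-divisibility cycles, so its contrapositive provides a lower bound that holds for \emph{every} edge, and in particular we only need to exhibit one.

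First I would handle the (trivial) edge case: since $\chi(G) > k$ and any edgeless graph has chromatic number at most $1 \leq k$, the graph $G$ must contain at least one edge. Pick any edge $e \in E(G)$.

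Next I would apply the contrapositive of Theorem~\ref{0modk}. That statement says: if there exists an edge $e$ with $G-e$ having fewer than $\frac{(k-1)!}{2}$ cycles of length $0 \bmod k$, then $\chi(G) \leq k$. Negating, $\chi(G) > k$ forces every edge $e$ to satisfy: $G-e$ contains at least $\frac{(k-1)!}{2}$ cycles of length $0 \bmod k$. In particular this holds for the edge $e$ chosen above.

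Finally, since $G-e$ is a subgraph of $G$, every cycle of $G-e$ is a cycle of $G$ (of the same length, in particular of the same residue modulo $k$). Thus $G$ itself contains at least $\frac{(k-1)!}{2}$ cycles of length $0 \bmod k$, which is the desired conclusion. There is no real obstacle here: the content of the corollary is entirely packaged in Theorem~\ref{0modk}, and the only thing to notice is that we may freely pass from $G-e$ back up to $G$ since removing an edge can only destroy cycles, not create them.
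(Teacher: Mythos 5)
Your proof is correct and matches the paper's intent: the corollary is stated without an explicit proof precisely because it is the immediate contrapositive of Theorem~\ref{0modk}, exactly as you argue, together with the observation that $G-e$ is a subgraph of $G$. The only point worth noting, which you do address, is that $\chi(G) > k \geq 1$ guarantees $G$ has an edge to which the theorem can be applied.
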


The complete graph of order $k+1$ has precisely $\frac{(k+1)(k-1)!}{2}$ cycles of length $0\bmod k$, and so Corollary~\ref{0modkcor} is within a factor $k+1$ of being tight. We wonder whether $K_{k+1}$ contains the fewest cycles of length $0\bmod k$ among all non-$k$-colourable graphs. 

\begin{conj}
\label{cycles}
If $\chi(G)>k$, then $G$ contains at least $\frac{(k+1)(k-1)!}{2}$ cycles of length $0\bmod k$.
\end{conj}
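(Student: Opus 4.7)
\medskip
\noindent\textbf{Proof proposal.}
The plan is to strengthen the argument of Theorem~\ref{0modk} in two complementary ways. First, reduce to the case where $G$ is $(k+1)$-critical, so its minimum degree is at least $k$ and $\chi(G-e)\leq k$ for every edge $e$. The tight case $G=K_{k+1}$ is verified by direct counting: its cycles of length $k$ are precisely the Hamiltonian cycles of the $k+1$ spanning $K_k$'s obtained by deleting a single vertex, totaling exactly $(k+1)(k-1)!/2$ cycles of length $0\bmod k$.

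For $G\neq K_{k+1}$, Brooks' theorem (for $k\geq 3$) lets us pick a vertex $u$ with $\deg(u)\geq k+1$. Select $k+1$ incident edges $e_1=uv_1,\dots,e_{k+1}=uv_{k+1}$; by criticality, each $G-e_i$ admits a $k$-colouring $f_i$, which necessarily satisfies $f_i(u)=f_i(v_i)$ (otherwise $f_i$ would $k$-colour $G$). Applying the argument of Theorem~\ref{0modk} to $(G-e_i,f_i)$ produces a collection $\mathcal{C}_i$ of at least $(k-1)!/2$ cycles of length $0\bmod k$ in $G$ avoiding $e_i$. The conjecture then reduces to proving $\left|\bigcup_{i=1}^{k+1}\mathcal{C}_i\right|\geq (k+1)(k-1)!/2$.

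The main obstacle is that a naive double count over the $\mathcal{C}_i$'s falls short by a factor of roughly $k-1$, since a cycle through $u$ can lie in $\mathcal{C}_i$ for many values of $i$. Inspecting the argument applied to $K_{k+1}$ suggests a remedy: there, each $\mathcal{C}_i$ in fact contains all $(k-1)(k-1)!/2$ cycles of length $k$ in $G-e_i$, and a double count over the $\binom{k+1}{2}$ edges then delivers exactly $(k+1)(k-1)!/2$ distinct cycles. The plan is to establish an analogous quantitative strengthening of Theorem~\ref{0modk} for general $(k+1)$-critical graphs, arguing that for each permutation $\pi$ with $\pi(0)=0$ the digraph $F_{f_i,\pi}[S_{f_i}]$ contains many distinct directed cycles (enough extra cycles, roughly of order $k-1$, to compensate for overlaps), perhaps by exploiting the $k$-edge-connectivity of $(k+1)$-critical graphs guaranteed by Dirac's theorem. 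Establishing this multi-cycle strengthening is the crux; I expect it to demand structural insight into critical graphs well beyond what was needed for Theorem~\ref{0modk}, and is where the hardest part of the argument will lie.
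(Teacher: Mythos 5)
This statement is labelled as a \emph{conjecture} in the paper, and the authors do not prove it; they explicitly say they ``wonder whether $K_{k+1}$ contains the fewest cycles of length $0\bmod k$.'' So there is no proof in the paper to compare against, and your proposal would, if completed, be an original contribution rather than a rediscovery.

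As you acknowledge, the proposal is incomplete: the ``multi-cycle strengthening'' of Theorem~\ref{0modk} is exactly the hard part, and it is left unestablished. Beyond that, the piece of evidence you offer in support of the plan contains an error. For $G=K_{k+1}$ and $e_i=uv_i$, you claim that $\mathcal{C}_i$ ``contains all $(k-1)(k-1)!/2$ cycles of length $k$ in $G-e_i$.'' This is false for $k\geq 4$. Any cycle appearing in some $F_{f_i,\pi}$ has, by construction, a colour sequence that traverses each of the $k$ colours exactly once per period, so no two consecutive vertices share a colour; in particular such a cycle cannot contain both $u$ and $v_i$, since $f_i(u)=f_i(v_i)$ and $uv_i\notin E(G-e_i)$. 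The cycles of $K_{k+1}-e_i$ through exactly one of $u,v_i$ number $(k-1)!$, and those are the only ones that can ever arise from the digraphs $F_{f_i,\pi}$; the remaining $(k-1)!(k-3)/2$ cycles of $K_{k+1}-e_i$ pass through both $u$ and $v_i$ and are invisible to the construction. So even in the tight example the argument of Theorem~\ref{0modk} does not see ``all'' the cycles, and the intuition that one picks up an extra factor of roughly $k-1$ per edge from the $F_{f_i,\pi}$'s is not supported by the base case. If you wish to pursue this approach you need a genuinely new mechanism for generating cycles through both endpoints of the deleted edge, or a different accounting that shows the cycles obtained across distinct $e_i$'s are sufficiently disjoint; neither is present in the proposal.
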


The case $k=3$ may be particularly instructive: \emph{Is it true that every graph $G$ with $\chi(G)\geq 4$ contains at least $4$ cycles of length $0\bmod3$?} If Conjecture~\ref{cycles} turns out to be false, then it would still be interesting to determine the minimum number of cycles of length $0\bmod k$ in a graph of chromatic number greater than $k$. 

More generally, one could investigate the minimum number cycles of length $r\bmod k$ in a graph of chromatic number at least, say, $f(r,k)$. With regards to the \emph{existence} of such cycles, Chen, Ma and Zang~\cite{Ma} proved that any graph with chromatic number greater than $k$ must contain a cycle of length $r\bmod k$ for $r\in\{0,\dots,k-1\}\setminus\{2\}$ and that any graph of chromatic number greater than $k+1$ must also contain a cycle of length $2\bmod k$. Dean, Lesniak and Saito~\cite{Dean} proved that if $\chi(G)\geq4$, then $G$ has a cycle of length $0\bmod 4$. Similar problems for \emph{induced} cycles are very well studied but usually, in this setting, the function $f$ also depends on the size of the largest clique in $G$; see, e.g.~\cite{Bonamy,Holes2,Holes3,Gyarfas,Lagoutte,Holes1,Holes4}.

\begin{ack}
The authors would like to thank Marcin Wrochna for allowing us to share his lovely proof of Theorem~\ref{0mod3}. We would also like to thank Jie Ma for directing our attention to~\cite{Chen1994} and~\cite{Ma}. The fourth author would like to thank Marthe Bonamy and Guillem Perarnau for stimulating discussions about reconfiguration problems during a workshop at the Bellairs Institute of McGill University in Holetown, Barbados in 2015. Finally, we wish to thank the anonymous referees for their many helpful suggestions.
\end{ack}

\end{document}